\documentclass{amsart}

\usepackage{amssymb}
\usepackage{graphicx}
\usepackage{amsthm}
\usepackage{float}
\usepackage{tikz}
\usepackage{url}
\usepackage{wrapfig}
\usepackage{bm}

\newcommand\ZZ{\mathbb{Z}}
\newcommand\ZZT{\mathbb{Z}[\sqrt{2}]}
\newcommand\ZZI{\mathbb{Z}[\sqrt{-1}]}
\newcommand\ZZJ{\mathbb{Z}[\sqrt{-2}]}

\newcommand\ZZd{\mathbb{Z}[\sqrt{d_0}]}

\newcommand\CC{\mathbb{C}}
\newcommand\RR{\mathbb{R}}
\newcommand\QQ{\mathbb{Q}}
\newcommand\QQd{\mathbb{Q}(\sqrt{d})}

\newcommand\Sd{\sqrt{d}}
\newcommand\Sdo{\sqrt{d_0}}
\newcommand\chipt{\chi_{\mathfrak{t}}}
\newcommand\chiw{\chi_{w}}

\newcommand\FF{\mathbb{F}}
\newcommand\OO{\mathcal{O}}

\newcommand\RRR{\mathcal{R}}
\newcommand\TTT{\mathcal{T}}
\newcommand\III{\mathcal{I}}
\newcommand\BB{\mathcal{B}}

\newcommand\oalpha{\overline{\alpha}}
\newcommand\ow{\overline{w}}
\newcommand\oz{\overline{z}}
\newcommand\bb{\mathfrak{b}}
\newcommand\obb{\overline{\mathfrak{b}}}

\newcommand\ff{\mathfrak{f}}

\newcommand\pp{\mathfrak{p}}

\newcommand\qq{\mathfrak{q}}

\newcommand\rrr{\mathfrak{r}}
\newcommand\pt{\mathfrak{t}}

\newcommand\aaa{\mathfrak{a}}
\newcommand\oaaa{\overline{\mathfrak{a}}}

\newcommand\ccc{\mathfrak{c}}

\newcommand\Aut{\mathrm{Aut}}

\newcommand\rk{\mathrm{rk}}
\newcommand\CL{\mathrm{Cl}}

\newcommand\Gal{\mathrm{Gal}}
\newcommand\Norm{\mathrm{N}}
\newcommand\val{\mathrm{ord}}

\newcommand\sign{\mathrm{sign}}
\newcommand\mul{\mathrm{m}}

\newcommand\Area{\mathrm{Area}}

\newcommand\ve{\varepsilon}

\newcommand\ONE{\textbf{1}}
\newcommand\br{\bm{r}}
\newcommand\bs{\bm{s}}
\newcommand\bc{\bm{c}}
\newcommand\be{\bm{e}}

\newcommand\BP{\mathbb{P}}

\theoremstyle{plain} \newtheorem{theorem}{Theorem}
\theoremstyle{plain} 
\theoremstyle{plain} 
\theoremstyle{plain} 
\theoremstyle{plain} \newtheorem{lemma}{Lemma}
\theoremstyle{plain} 
\theoremstyle{plain} 
\theoremstyle{plain} \newtheorem{conjecture}{Conjecture}
\theoremstyle{plain} 
\theoremstyle{plain} 
\theoremstyle{plain}\newtheorem{prop}{Proposition}
\theoremstyle{remark} \newtheorem*{remark}{Remark}
\theoremstyle{remark} 
\theoremstyle{remark} 
\theoremstyle{remark} 

\DeclareMathOperator*{\sumsum}{\sum\sum}

\DeclareMathOperator*{\suma}{\sum{}^{\ast}}

\title{On the $8$-rank of narrow class groups of $\QQ(\sqrt{-4pq})$, $\QQ(\sqrt{-8pq})$, and $\QQ(\sqrt{8pq})$}
\date{\today}
\author{Djordjo Milovic}
\address{University College London, London WC1E 6BT, United Kingdom}

\begin{document}

\begin{abstract}
Let $d \in \{-4, -8, 8\}$. We study the $8$-part of the narrow class group in the thin families of quadratic number fields of the form $\QQ(\sqrt{dpq})$, where $p\equiv q \equiv 1\bmod 4$ are prime numbers, and we prove new lower bounds for the proportion of narrow class groups in these families that have an element of order $8$. In the course of our proof, we prove a general double-oscillation estimate for the quadratic residue symbol in quadratic number fields.
\end{abstract}

\maketitle

\section{Introduction}
In \cite{Ste2}, Stevenhagen studied the $2$-part of narrow class groups in thin families of quadratic number fields parametrized by one prime number, namely families of the form $\{\QQ(\sqrt{dp})\}_{p\equiv 1(4)}$, where $d\in\{-4, -8, 8\}$ and where $p$ varies over prime numbers congruent to $1$ modulo $4$. In this paper, we aim to prove new results about the $2$-part of narrow class groups in similar thin families of quadratic number fields, except this time parametrized by products of two distinct prime numbers. We consider quadratic fields of the form $\QQ(\sqrt{dpq})$, where again $d\in\{-4, -8, 8\}$ and where now $p$~and~$q$ vary over pairs of distinct prime numbers congruent to $1$ modulo $4$. One of the key features of \cite{Ste2} (and more generally \cite{Ste1}) is that the distribution of the $8$-rank in one-parameter families as above can be deduced from the \v{C}ebotarev Density Theorem. The main novelty in the present setting is the introduction of double-oscillation estimates concerning certain families of Hecke characters to overcome the poor uniformity (in $q$) of the error terms in the \v{C}ebotarev Density Theorem when applied in families of number fields (parametrized by a prime $q$).

Let $d\in\{-4, -8, 8\}$, let $p\equiv q\equiv 1\bmod 4$ be two distinct prime numbers, and consider the narrow class group $\CL(dpq)$ of the quadratic number field of discriminant~$dpq$. We recall that the \text{narrow} class group of a number field $K$ is the quotient of the group of non-zero fractional ideals of $K$ by the subgroup of principal ideals that can be generated by an element $\alpha\in K$ satisfying $\sigma(\alpha)>0$ for every real embedding $\sigma: K\hookrightarrow \RR$; the narrow class group is canonically isomorphic, via the Artin map, to the Galois group of the maximal abelian extension of $K$ unramified at all finite primes. In particular, if $K$ is totally complex, then the narrow and the usual class groups coincide.

Given any finite group $G$ and an integer $k\geq 1$, we define the $2^k$-rank of $G$ to be $\rk_{2^k} G := \dim_{\FF_2}\left(2^{k-1}G/2^k G\right)$. Gauss's genus theory \cite{Gauss} then implies that $\rk_2\CL(dpq) = 2$, i.e., that the $2$-part of $\CL(dpq)$ is a direct sum of two cyclic $2$-groups. We wish to better understand of the size these cyclic $2$-groups. R\'{e}dei's work \cite{Redei} implies that $\rk_4 \CL(dpq) = 2$ if and only if $p\equiv q\equiv 1\bmod 8$ and $p$ is a square modulo~$q$. We note that Gerth~\cite{Gerth1, Gerth3, Gerth2} as well as Fouvry and Kl\a"{u}ners~\cite{FK2, FK0, FK1}, building on the work of Heath-Brown~\cite{HB1, HB2}, developed robust techniques to study the $4$-rank in families of many different types.

There are three main analytic results concerning the $8$-rank in families of quadratic number fields. First, Stevenhagen \cite{Ste1} proved that if $d\neq 0$ is any integer, then there is a normal extension $M_d/\QQ$ such that $\rk_8\CL(dp)$ is determined by the Artin symbol of $p$ in $M_d/\QQ$; hence the density of the set of primes $p$ for which $\rk_8\CL(dp)$ is equal to a given value can be deduced from the \v{C}ebotarev Density Theorem applied to $M_d/\QQ$. Note that the families studied by Stevenhagen are parametrized by a \textit{single} prime. Next, Fouvry and Kl\a"{u}ners \cite{FK1} proved certain distribution results about the $8$-rank in a special family parametrized by \textit{arbitrarily many} primes $\not\equiv 3\bmod 4$, but having $4$-rank equal to $1$. Finally, two recent works of Smith \cite{Smith1, Smith2} claim very strong results about the $8$- and higher $2$-power-ranks in the family of all \textit{imaginary} quadratic fields. His methods, however, heavily rely on the fact that the average number of prime factors of a discriminant $D$ grows as $\log\log D$ and are thus inapplicable to the \textit{thin} families we study. We prove
\begin{theorem}\label{mainthm}
Let $p$ and $q$ denote distinct prime numbers congruent to $1\bmod 4$. Then for $d\in\{-4, -8, 8\}$, we have
$$
\liminf_{X\rightarrow \infty}\frac{\#\{pq\leq X: \rk_4\CL(dpq) = 2, \rk_8\CL(dpq)\geq 1\}}{\#\{pq\leq X: \rk_4\CL(dpq) = 2\}}\geq \frac{c_d}{8},
$$
where $c_{-4} = c_{-8} = 2$ and $c_8 = 1$.
\end{theorem}
The asymptotic formula for the denominator in the ratio above is
\begin{equation}\label{basicdensity1}
\#\{pq\leq X: p\equiv q\equiv 1\bmod 4,\ \rk_4\CL(dpq) = 2\} \sim\frac{1}{32}\frac{X\log\log X}{\log X}
\end{equation}
as $X\rightarrow \infty$ (for any $d\in\{-4, -8, 8\}$). This formula is a slight variation of \cite[Equation (2.12), p. 493]{Gerth1}, whose proof for our particular case can be found in \cite{Gerth2}. The heuristic model of Cohen and Lenstra \cite{CohenLenstra} predicts that the limit in the Main Theorem exists and is equal to $5/8$ in the cases $d = -4$ and $d = -8$ and $11/32$ in the case $d = 8$. See Section \ref{Heuristics} for more details. We also note that the $16$- and higher $2$-power-ranks appear to be much harder to study from an analytic perspective, and there are only a few results in this direction \cite{Milovic1, Milovic2, KM1, KM2, Smith2}

The proof of the Theorem \ref{mainthm} exploits a new type of \textit{lower} bound for the $8$-rank. In \cite{FK1}, Fouvry and Kl\"{u}ners define a quantity $\lambda_D$ conducive to analytic techniques which gives a good \textit{upper} bound for the $8$-rank of the narrow class group $\CL(D)$ for a special class of positive discriminants $D$. This upper bound $\lambda_D$ actually coincides with $\rk_8\CL(D)$ when $\rk_4\CL(D) = 1$. However, when $\rk_4\CL(D) \geq 2$, the quantity $\lambda_D$ is only an upper bound for $\rk_8\CL(D)$ and hence cannot be used to deduce that $\rk_8\CL(D)\geq 1$. Therefore, Theorem~\ref{mainthm} cannot be readily deduced from the techniques in \cite{FK1}. One might try to deduce Theorem~\ref{mainthm} from~\cite{Ste1} by first applying, for a fixed $d\in\{-4, -8, 8\}$ and each prime $q\equiv 1\bmod 8$, the \v{C}ebotarev Density Theorem to the field extension $M_{dq}/\QQ$ to get a density $\delta_{d, q}$ for the set $S_{d, q}$ of primes $p\equiv 1 \bmod 4$ for which $\rk_4\CL(dpq) = 2$ and $\rk_8\CL(dpq)\geq 1$, i.e.,
$$
N_{d, q}(x) = \# \left\{p\in S_{d, q}: p\leq x \right\} = \delta_{d, q} \frac{x}{\log x} + E_{d, q}(x), 
$$
where $E_{d, q}(x) = o(x/\log x)$ as $x\rightarrow \infty$, and then patching together the contributions from different primes $q$ to get the asymptotics for the sum $\sum_{q\leq X} N_q\left(X/q\right)$ as $X\rightarrow \infty$. Unfortunately, the fields $M_{dq}$ are obtained via the existence theorem of class field theory and hence not explicit enough in $d$ and $q$ for this approach to work. Smith~\cite{Smith1}, following Corsman~\cite{Corsman}, constructs the fields $M_{dq}$ quite explicitly; however, the discriminants $dpq$ with $\rk_4\CL(dpq)=2$ are not \textit{generic} in the sense of Smith~\cite[Definition 2.4, p.\ 11]{Smith1} and hence not conducive to applying the \v{C}ebotarev Density Theorem. Perhaps more importantly, Smith assumes the Grand Riemann Hypothesis to overcome the very poor uniformity in $q$ of the best known bounds for the error term $E_{d, q}(x)$.

To avoid assuming the Grand Riemann Hypothesis, we prove double-oscillation results in quadratic rings (such as $\ZZI$, $\ZZJ$ and $\ZZT$) that are reminiscent of \cite[Proposition 21.3, p. 1027]{FI1}. In our case, however, we need somewhat more precise estimates -- the term $(MN)^{\epsilon}$ must be replaced by an arbitrary power of $\log {(MN)}$. A general approach to proving these types of double-oscillation results was already developed in \cite{Heilbronn}, so, after making appropriate adjustments to work inside more general number rings instead of the rational integers, the heart of the proof of \cite[Proposition 21.3, p. 1027]{FI1} lies in achieving cancellation in characters sums as in \cite[Lemma 21.1, p. 1025]{FI1}. In Proposition~\ref{keycancellation} of this paper, we give a shorter and more natural proof of a generalization of this result.

\subsection*{Acknowledgements}
I would like to thank Farrell Brumley, \a'{E}tienne Fouvry, Carlo Pagano, Peter Stevenhagen, and the anonymous referee for their useful advice. This research was supported by an ALGANT Erasmus Mundus Scholarship and National Science Foundation agreement No.\ DMS-1128155.

\section{Algebraic Criteria for the $8$-rank}\label{AlgebraicCriteria}
\subsection{Preliminaries}
Let $K$ be a quadratic number field of discriminant $D$, $\OO_K$ its maximal order, and $\CL$ the narrow class group of $\OO_K$. The \textit{narrow Hilbert class field} $H$ of $K$ is the maximal abelian extension of $K$ unramified at all finite primes. Hereafter, we will use the shorthand ``unramified a.f.p.'' for ``unramified at all finite primes''. The Artin map induces a canonical isomorphism of groups
\begin{equation}\label{ArtSym}
\left(\frac{\cdot}{H/K}\right):\CL\longrightarrow \Gal(H/K). 
\end{equation}
The above isomorphism allows us to deduce information about $\CL$ by constructing and studying abelian unramified a.f.p.\ extensions of $K$.

The $2$-torsion subgroup $\CL[2]$ is generated by the classes of the ramified finite primes in $K/\QQ$ (see for instance \cite[Corollary 9.9, p.\ 80]{Ste1}), i.e.,
\begin{equation}\label{twotorsion}
\CL[2] = \left\langle [\pp]:\pp\text{ prime ideal of }\OO_K\text{ such that }\pp|D\right\rangle.
\end{equation}
We will use the two facts above in tandem via the following lemma; although it is a straightforward generalization of the argument in \cite[p.\ 18-19]{Ste1}, we have not been able to find the exact statement in the literature, and so we include a proof for the sake of completeness. Hereafter, $C_n$ will denote a cyclic group of order $n$.
\begin{lemma}\label{timestwo}
Let $K$ be a quadratic number field. Suppose that $L/K$ is an unramified a.f.p.\ $C_{2^n}$-extension for some $n\geq 1$. Then every prime ideal $\pp$ of $\OO_K$ that is ramified in $K/\QQ$ splits completely in $L/K$ if and only if there exists an unramified a.f.p.\ $C_{2^{n+1}}$-extension $L'$ of $K$ containing $L$.
\end{lemma}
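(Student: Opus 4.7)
The plan is to apply the Artin map (\ref{ArtSym}) to translate both conditions into statements about characters of $\CL$ of $2$-power order, and then to reduce to a standard fact about finite abelian groups. There is no serious technical obstacle here; the proof is essentially a dictionary between unramified abelian $2$-power extensions of $K$ and elements of $\text{Hom}(\CL, \QQ/\ZZ)$ of $2$-power order.

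First I would set up the character-theoretic translation. Identifying $C_{2^k}$ with the subgroup $\tfrac{1}{2^k}\ZZ/\ZZ$ of $\QQ/\ZZ$, the Artin map (\ref{ArtSym}) puts unramified $C_{2^n}$-extensions $L/K$ in bijection with surjective characters $\chi:\CL \twoheadrightarrow C_{2^n}$ via $\ker\chi = \ker(\CL \to \Gal(L/K))$. An unramified $C_{2^{n+1}}$-extension $L'\supseteq L$ corresponds to a surjective $\tilde\chi:\CL \twoheadrightarrow C_{2^{n+1}}$ with $\ker\tilde\chi \subseteq \ker\chi$; compatibility with the canonical surjection $C_{2^{n+1}} \twoheadrightarrow C_{2^n}$, which is multiplication by $2$ in additive notation, forces $2\tilde\chi = \chi$, and any such $\tilde\chi$ automatically has order $2^{n+1}$ because $\chi$ has order $2^n$. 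Thus the existence of $L'$ is equivalent to $\chi$ being $2$-divisible in $\text{Hom}(\CL,\QQ/\ZZ)$. Meanwhile, since $L/K$ is unramified, every prime $\pp$ of $\OO_K$ has a well-defined Frobenius in $\Gal(L/K)$, equal to $\chi([\pp])$ under the Artin isomorphism, so $\pp$ splits completely in $L/K$ iff $\chi([\pp]) = 0$. By (\ref{twotorsion}) the classes $[\pp]$ with $\pp \mid D$ generate $\CL[2]$, so every ramified prime of $K/\QQ$ splits completely in $L/K$ if and only if $\chi|_{\CL[2]} = 0$.

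The lemma then reduces to the purely group-theoretic claim that a surjective character $\chi:\CL \twoheadrightarrow C_{2^n}$ is $2$-divisible in $\text{Hom}(\CL,\QQ/\ZZ)$ if and only if $\chi|_{\CL[2]} = 0$. The ``only if'' direction is immediate from $\chi(x) = \tilde\chi(2x)$. For the ``if'' direction I would decompose the $2$-primary part of $\CL$ as $\bigoplus_i C_{2^{b_i}}$ and note that a character on $C_{2^{b_i}}$ killing its unique element of order $2$ has image of exponent at most $2^{b_i-1}$, hence lies in the image of multiplication by $2$ on $\text{Hom}(C_{2^{b_i}},\QQ/\ZZ)$; equivalently, one can invoke the Pontryagin-dual perfect pairing $\CL/2\CL \times \CL[2] \to \tfrac{1}{2}\ZZ/\ZZ$. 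Combining this with the translations above yields the lemma.
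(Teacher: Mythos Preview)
Your argument is correct. The one slightly loose phrase is ``forces $2\tilde\chi = \chi$'': strictly, from $\ker\tilde\chi\subset\ker\chi$ and cyclicity of $\CL/\ker\tilde\chi$ you get that $2\tilde\chi$ and $\chi$ have the same kernel, hence agree up to a unit of $(\ZZ/2^n\ZZ)^\times$; but since such a unit is odd, $\chi$ is $2$-divisible regardless, so the conclusion stands.

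Your route is genuinely different from the paper's. The paper also reduces the splitting condition to $\CL[2]\subset\Gal(H/L)$ via the Artin map and \eqref{twotorsion}, but then argues directly with subfields rather than dualizing to characters: from $\CL[2]=\Gal(H/L)[2]$ it deduces that the sets of quadratic subextensions $S(H/K)$ and $S(H/L)$ have equal cardinality, and then a counting argument on the map $K'\mapsto LK'$ produces an $L'\in S(H/L)$ whose unique quadratic subfield over $K$ is the one inside $L$, forcing $\Gal(L'/K)\cong C_{2^{n+1}}$. The converse is handled by restricting an order-$2$ element of $\CL$ to $L'$. Your approach trades this field-theoretic bookkeeping for the Pontryagin-dual statement that a character kills $\CL[2]$ iff it is $2$-divisible in $\mathrm{Hom}(\CL,\QQ/\ZZ)$; this is cleaner, generalizes immediately (e.g.\ to $C_{\ell^n}$-extensions), and makes the equivalence visibly a statement about a single finite abelian group. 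The paper's argument, by contrast, stays entirely on the Galois side and avoids any duality formalism.
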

\begin{proof}
As $L/K$ is unramified a.f.p.\ and abelian, $L$ must be contained in the narrow Hilbert class field $H$. A prime ideal $\pp$ of $\OO_K$ splits completely in $L/K$ if and only if
$$
\left(\frac{\pp}{L/K}\right) = 1 \in \Gal(L/K)\cong \CL/\Gal(H/L).
$$
Hence, by \eqref{twotorsion}, every prime ideal $\pp$ of $\OO_K$ that is ramified in $K/\QQ$ splits completely in $L/K$ if and only if $\CL[2]\leq \Gal(H/L)$. Dually, in terms of the corresponding character groups, this holds if and only if
$$
\CL^{\vee}/\Gal(L/K)^{\vee}\cong \Gal(H/L)^{\vee} \twoheadrightarrow \CL[2]^{\vee}\cong \CL^{\vee}/\left(\CL^{\vee}\right)^2,
$$
i.e., if and only if $\Gal(L/K)^{\vee}\leq \left(\CL^{\vee}\right)^2$. Now, as $\Gal(L/K)\cong C_{2^n}$, so also $\Gal(L/K)^{\vee}\cong C_{2^n}$, with a generator, say, $\chi$. Thus $\Gal(L/K)^{\vee}\leq \left(\CL^{\vee}\right)^2$ if and only if $\chi = \psi^2$ for some $\psi\in \CL^{\vee}$, which holds if and only if there exists a group $A = \left\langle \psi\right\rangle\cong C_{2^{n+1}}$ with $\Gal(L/K)^{\vee}\leq A \leq \CL^{\vee}$. Dually, this holds if and only if there exists a $C_{2^{n+1}}$-extension $L'/K$ with $L\subset L'\subset H$. 
\end{proof}

We will also make use of the following lemma from Galois theory (see \cite[Chapter VI, Exercise 4, p.321]{Lang}). 
\begin{lemma}\label{lemNor}
Let $F$ be a field of characteristic different from $2$, let $E = F(\sqrt{d})$, where $d\in F^{\times}\setminus (F^{\times})^2$, and let $L = E(\sqrt{x})$, where $x\in E^{\times}\setminus (E^{\times})^2$. Let $N = \Norm_{E/F}(x)$. Then $N\in d\cdot (F^{\times})^2$ if and only if $L/F$ is normal with $\Gal(L/F)\cong C_4$, the cyclic group of order $4$.
\end{lemma}

\subsection{Special two-parameter families}
Let $d \in\{-4, -8, 8\}$, and let $p$ and $q$ be odd primes congruent to $1$ modulo $4$. Let $K = \QQ(\sqrt{dpq})$, and let $H$ denote its narrow Hilbert class field. Let $d_0 = d/4$, so that the maximal order of $K$ is $\OO_K = \ZZ[\sqrt{d_0pq}]$. We are ultimately interested in the average value of $\rk_8\CL(dpq)$ of $\OO_K$ as $p$ and $q$ range among prime numbers satisfying $pq\leq X$, for a real parameter $X$ going to infinity.

Let $\CL = \CL(dpq)$. Gauss's genus theory implies that $\rk_2\CL = 2$ and that the genus field, the maximal abelian extension of $\QQ$ contained in $H$, is
$$
G = H^{\CL^2} = \QQ(\Sd, \sqrt{p}, \sqrt{q}).
$$
The three quadratic subfields $G_1  = K(\Sd)$, $G_2 = K(\sqrt{p})$, and $G_3 = K(\sqrt{q})$ of $G$ correspond to the three proper subgroups of $\CL/\CL^2$. The three ramified primes $\pt$, $\pp$, and $\qq$ of $\OO_K$ that lie above $2$, $p$, and $q$, respectively, generate the $2$-torsion subgroup $\CL[2]$ and will play a prominent role in the subsequent discussions. Clearly $\rk_4\CL\leq\rk_2 \CL = 2$, and in fact the $4$-rank of $\CL$ is the largest it could be exactly when $p$ and $q$ satisfy
\begin{equation}\label{pqcong}
p\equiv q\equiv 1\bmod 8,
\end{equation}
and
\begin{equation}\label{pqjac}
\left(\frac{p}{q}\right) = 1.
\end{equation}
\begin{prop}\label{4rank}
Let $d\in\{-4, -8, 8\}$, and let $p$ and $q$ be odd prime numbers congruent to $1$ modulo $4$. Let $\CL = \CL(dpq)$ denote the narrow class group of the quadratic number field $\QQ(\sqrt{dpq})$. Then $\rk_4\CL = 2$ if and only if $p$ and $q$ satisfy \eqref{pqcong} and \eqref{pqjac}. 
\end{prop}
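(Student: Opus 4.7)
The plan is to reduce the condition $\rk_4\CL=2$ via class field theory and Lemma \ref{timestwo} to an explicit list of splitting conditions for the ramified primes of $K$ in the three unramified quadratic extensions of $K$, and then to compute each such condition.

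Since $\rk_2\CL=2$ by \eqref{genustheory}, the structure theorem for finite abelian $2$-groups gives $\rk_4\CL=2$ if and only if $\CL[2]\subseteq\CL^2$. By class field theory, $\CL^2$ is the intersection of the kernels of the three nontrivial characters of $\CL/\CL^2$, which correspond bijectively to the three unramified quadratic extensions $G_1=K(\sqrt d)$, $G_2=K(\sqrt p)$, $G_3=K(\sqrt q)$. Combined with \eqref{twotorsion}, this makes $\CL[2]\subseteq\CL^2$ equivalent to the statement that each of $\pt, \pp, \qq$ splits in each $G_i/K$; by Lemma \ref{timestwo}, this is in turn equivalent to the existence, for each $i$, of an unramified $C_4$-extension of $K$ containing $G_i$.

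For each of the nine pairs (ramified prime of $K$ above $\ell\in\{2,p,q\}$, field $G_i=K(\sqrt m)$ with $m\in\{d,p,q\}$), I plan to read the splitting behavior off the biquadratic extension $K(\sqrt m)/\QQ$. In each such case the prime $\ell$ turns out to be ramified in exactly two of the three quadratic subfields of $K(\sqrt m)$, one of them being $K$ itself, leaving a unique unramified subfield $F$. A decomposition-group computation then shows that the $K$-prime above $\ell$ splits in $G_i/K$ if and only if $\ell$ splits completely in $F/\QQ$, which is either a Legendre symbol or a congruence modulo $8$. For instance, when $d=-4$ one obtains that $\pp$ splits in $K(\sqrt q)/K$ iff $(q/p)=1$, that $\pt$ splits in $K(\sqrt p)/K$ iff $p\equiv 1\bmod 8$, and that $\pp$ splits in $K(\sqrt d)/K$ is automatic from $p\equiv 1\bmod 4$.

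Carrying this out for both $d=-4$ and $d=8$ and simplifying using $p\equiv q\equiv 1\bmod 4$ together with quadratic reciprocity, the entire system collapses to the three conditions $p\equiv 1\bmod 8$, $q\equiv 1\bmod 8$, and $(p/q)=1$, i.e., precisely \eqref{pqcong} and \eqref{pqjac}. The only real obstacle is the bookkeeping across the two values of $d$ and the nine splitting conditions; the one mild subtlety is that the splitting of $\pt$ in $G_1=K(\sqrt d)/K$ yields the intermediate condition $pq\equiv 1\bmod 8$, which is redundant once $p,q\equiv 1\bmod 8$ and hence contributes no new constraint.
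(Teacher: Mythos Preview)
Your proposal is correct and follows essentially the same route as the paper: both reduce $\rk_4\CL=2$ to the statement that every ramified prime of $K$ splits in each of the three genus subfields $G_i$, and both compute these splitting conditions via the $V_4$-structure of $G_i/\QQ$, obtaining precisely \eqref{pqcong} and \eqref{pqjac}. Your write-up is in fact a bit more explicit than the paper's (which checks only one of the nine cases and leaves the rest to the reader), and your direct use of the equivalence $\rk_4\CL=2\Leftrightarrow\CL[2]\subseteq\CL^2$ bypasses the need to invoke Lemma~\ref{timestwo}, though the paper's detour through that lemma is harmless.
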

\begin{proof}
The extension $G_i/\QQ$ is a $V_4$-extension for $i = 1, 2, 3$, so the splitting behavior of $\pt$, $\pp$, and $\qq$ in $G_i/K$ is determined by the splitting behavior of $2$, $p$, and $q$, respectively, in quadratic subfields of $G_i$. Conditions \eqref{pqcong} and \eqref{pqjac} imply that $\pt$, $\pp$, and $\qq$ all split in $G_i/K$ for $i = 1, 2, 3$. For instance, by \eqref{pqcong}, the prime $p$ splits in $\QQd/\QQ$, and so $\pp$ splits in $G_1$. Now Lemma \ref{timestwo} implies that $p$ and $q$ satisfy \eqref{pqcong} and \eqref{pqjac} if and only if there exists an unramified a.f.p.\ $C_4$-extension $L_i/K$ containing $G_i$ for $i = 1, 2, 3$. As $\rk_2\CL = 2$, the result follows from applying Galois theory to the isomorphism \eqref{ArtSym}. 
\end{proof}
From now on, suppose $p$ and $q$ satisfy \eqref{pqcong} and \eqref{pqjac}. Although Proposition \ref{4rank} demonstrates the existence of at least three distinct unramified a.f.p.\ $C_4$-extensions of $K$, one for each $G_i$, it may be difficult to construct these extensions explicitly from $d$, $p$, and $q$. In one case, however, we can do exactly this.

By \eqref{pqcong}, both $p$ and $q$ split in the principal ideal domain $\ZZd$, so there exist primes $w, z\in \ZZd$ such that $\Norm(w) = p$ and $\Norm(z) = q$. If $d = -4$, then, again by \eqref{pqcong}, we have
$$
w, z\equiv \pm 1 \equiv \square\bmod 4\ZZI.
$$
If $d = -8$ or $d = 8$, then we can replace $w$ by $-w$ and/or $z$ by $-z$ if necessary to ensure that
$$
w, z\equiv 1\text{ or }3+2\sqrt{d_0} \equiv \square\bmod 4\ZZd.  
$$
In any case, we can choose primes $w$ and $z$ in $\ZZd$ such that 
\begin{equation}\label{wzconditions}
\Norm(w)= p,\ \ \Norm(z) = q,\ \ \text{and}\ \ w, z\equiv \square\bmod 4\ZZd.
\end{equation}
Define $\alpha\in\ZZd$ and $x, y\in\ZZ$ by the equation
\begin{equation}\label{alpha}
\alpha = wz = x+y\Sdo \in \QQd\subset G_1.
\end{equation}
Then $\alpha$ satisfies the condition
\begin{equation}\label{square}
\alpha \equiv \square\bmod 4\ZZd,
\end{equation}
and $p$, $q$, $x$, and $y$ satisfy the relation $pq = x^2 - d_0 y^2$. For an element $a$ in $\QQd$, we will denote the conjugate of $a$ in $\QQd/\QQ$ (or $G_1/K$) by $\overline{a}$, so that $\oalpha = \ow\oz = x-y\Sdo$. Let $L_1 = G_1(\sqrt{\alpha}) = K(\sqrt{d}, \sqrt{\alpha})$. Note that $\sqrt{\oalpha} = \pm\sqrt{dpq}/(\sqrt{d}\sqrt{\alpha})\in L_1$.
\begin{prop}\label{unramified}
Let $\alpha \in \QQd$ be given by \eqref{alpha}, and let $L_1$ be as above. Then $L_1/K$ is an unramified a.f.p.\ $C_4$-extension.
\end{prop}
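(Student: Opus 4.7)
The plan is to verify two things: that $L_1/K$ is Galois with group $C_4$, and that $L_1/K$ is unramified at every finite prime. For the Galois structure, I would apply Lemma \ref{lemNor} with $F=K$, $E=G_1=K(\sqrt{d})$, and $x = \alpha$. Since $K \cap \QQd = \QQ$, restriction gives an isomorphism $\Gal(G_1/K) \to \Gal(\QQd/\QQ)$, so the nontrivial element of $\Gal(G_1/K)$ sends $\alpha \in \QQd$ to $\oalpha$. Hence $\Norm_{G_1/K}(\alpha) = \alpha\oalpha = (w\ow)(z\oz) = pq$ by \eqref{wzconditions}. The identity $pq = d \cdot (\sqrt{dpq}/d)^2$ places $pq$ in $d\cdot(K^\times)^2$, and, after a quick check that $\alpha \notin (G_1^\times)^2$ from the prime factorizations of $\alpha$ and $pq\cdot\alpha$ in $\ZZd$, case (3) of Lemma \ref{lemNor} yields $\Gal(L_1/K)\cong C_4$.

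For the unramifiedness, the extension $G_1/K$ is automatically unramified because $G_1$ sits inside the genus field $G$, itself a subfield of the Hilbert class field of $K$. It therefore suffices to show $L_1 = G_1(\sqrt{\alpha})$ is unramified over $G_1$ at every finite prime $\mathfrak{P}$ of $G_1$. For primes $\mathfrak{P}$ of odd residue characteristic I would use a valuation argument: each of $w, \ow, z, \oz$ is a prime of $\ZZd$ lying over $p$ or $q$ and dividing $pq$ with valuation one, so each ramifies in $G_1 = \QQd(\sqrt{pq})/\QQd$. Consequently $w\OO_{G_1}$ and $z\OO_{G_1}$ are squares of prime ideals of $\OO_{G_1}$, making $\alpha\OO_{G_1}$ the square of an ideal. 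Hence $v_{\mathfrak{P}}(\alpha)$ is even at every finite prime of $G_1$, and in odd residue characteristic this alone implies $G_1(\sqrt{\alpha})/G_1$ is unramified at $\mathfrak{P}$.

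The main obstacle is primes $\mathfrak{P}\mid 2$, where wild ramification can occur even when $\alpha$ is a local unit. Here I would exploit the congruence \eqref{square}: write $\alpha = \beta^2 + 4\gamma$ for some $\beta, \gamma \in \ZZd$ and set $\delta := (\sqrt{\alpha} - \beta)/2 \in L_1$. Then $\delta$ satisfies the monic equation $X^2 + \beta X - \gamma = 0$ over $\OO_{G_1}$ with discriminant $\beta^2 + 4\gamma = \alpha$; thus the order $\OO_{G_1}[\delta] \subseteq \OO_{L_1}$ has discriminant ideal $\alpha\OO_{G_1}$, and the relative discriminant of $L_1/G_1$ divides it. Since $\alpha$ is coprime to $2$ in $\ZZd$, no prime of $G_1$ above $2$ ramifies in $L_1$, completing the proof.
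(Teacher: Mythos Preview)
Your proof is correct. The $C_4$ structure argument and your treatment of primes above $2$ match the paper's approach (your change of variables $\delta=(\sqrt{\alpha}-\beta)/2$ is an explicit unfolding of the paper's local statement that $\alpha\equiv\square\bmod 4\ZZd$ forces the quadratic extension to be unramified at $2$).

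The genuine difference is at the odd primes $p$ and $q$. You work inside $G_1=\QQd(\sqrt{pq})$: since $w,\ow,z,\oz$ each ramify in $G_1/\QQd$, the ideal $\alpha\OO_{G_1}=(wz)\OO_{G_1}$ is a perfect square, so $v_{\mathfrak P}(\alpha)$ is even everywhere and $G_1(\sqrt{\alpha})/G_1$ is unramified at odd primes. The paper instead introduces the auxiliary field $A=\QQ(\sqrt d,\sqrt{\oalpha})$ and argues via the tower $\QQ\subset K\subset L_1$: because $w\nmid\oalpha$, the prime $w$ is unramified in $A/\QQd$, so $e(p)$ in $L_1/\QQ$ is at most $2$, and since $p$ already ramifies in $K/\QQ$ one concludes $\pp$ is unramified in $L_1/K$. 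Your argument is more self-contained; the paper's auxiliary field $A$, on the other hand, is reused immediately afterwards to express the splitting conditions \eqref{eqp}--\eqref{eqq} as quadratic residue symbols in $\QQd$, so its appearance there is not just a proof device but a set-up for what follows.
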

\begin{center} 
\begin{tikzpicture}
  \draw (0, 0) node[]{$L_1 = G_1(\sqrt{\alpha})$};
	\draw (0, -0.2) -- (0, -0.8);
  \draw (0, -1) node[]{$G_1 = K(\Sd)$};
	\draw (0, -1.2) -- (0, -1.8);
	\draw (0, -2) node[]{$K = \QQ(\sqrt{dpq})$};
	\draw (0, -2.2) -- (0, -2.75);
	\draw (0, -3) node[]{$\QQ$};
	\draw (-2.5, -1) node[]{$A = \QQ(\Sd, \sqrt{\oalpha})$};
	\draw (-2.5, -1.2) -- (-2.5, -1.8);
	\draw (-2.5, -2) node[]{$\QQd$};
  \draw (0, -0.2) -- (-2.5, -0.8);
	\draw (0, -1.2) -- (-2.5, -1.8);
	\draw (0, -2.75) -- (-2.5, -2.2);
\end{tikzpicture}
\end{center}
\begin{proof}
Since $\Norm_{G_1/K}(\alpha) = pq = d\cdot \left(\sqrt{dpq}/d\right)^2\in d\cdot (K^{\times})^2$, we see that $L_1/K$ is a $C_4$-extension, by Lemma \ref{lemNor}. The only primes that can ramify in $L_1/K$ are $\pt$, $\pp$, and $\qq$. We will show that $\pp$ is unramified in $L_1/K$, and by symmetry this will imply that $\qq$ is also unramified in $L_1/K$. As $w$ is a prime of degree one over $p$, it is coprime to~$\ow$. As $p$ and $q$ are distinct primes, $w$ is also coprime to $\oz$, and hence also to~$\oalpha$. Therefore $w$ does not ramify in $A = \QQ(\Sd, \sqrt{\oalpha})$, and so the ramification index of $p$ in $L_1/\QQ$ is at most $2$. But $p$ already ramifies in $K/\QQ$, and hence $\pp$ must be unramified in $L_1/K$. Finally, to see that $L_1/K$ is unramified over $\pt$, we may pass to the completion with respect to $\pt$ and show that $\QQ_2(\Sd, \sqrt{\alpha})/\QQ_2(\Sd)$ is unramified. This is the case if and only if $\alpha$ is a square modulo $4$ in the corresponding ring of integers $\ZZ_2[\sqrt{d_0}]$, and this is indeed ensured by condition \eqref{square}.\end{proof}

Now that we constructed $L_1/K$ explicitly, we can apply Lemma \ref{timestwo} to determine when $L_1$ is contained in an unramified a.f.p.\ $C_8$-extension $M_1/K$. We must determine when $\pt$, $\pp$, and $\qq$ all split completely in $L_1$. For the prime $\pt$, this can once again be determined locally. Indeed, $\pt$ splits completely in $L_1/K$ if and only if the extension of local fields $\QQ_2(\Sd, \sqrt{\alpha})/\QQ_2(\Sd)$ is trivial. This occurs if and only if $\alpha$ is a square in $\QQ_2(\sqrt{d})$, which happens if and only if $\alpha$ is a square modulo $\pt^5$, where by abuse of notation $\pt$ is now the maximal ideal in the discrete valuation ring $\ZZ_2[\Sdo]$. Explicitly, this means that 
\begin{equation}\label{split2}
\alpha\equiv \square\bmod \pt^5\equiv 
\begin{cases}
\pm 1 \bmod 4(1+\sqrt{-1})\ZZ_2[\sqrt{-1}] & \text{ if }d = -4,\\
1\text{ or }7+2\sqrt{-2} \bmod 4\sqrt{-2}\ZZ_2[\sqrt{-2}] & \text{ if }d = -8, \\
1\text{ or }3+2\sqrt{2} \bmod 4\sqrt{2}\ZZ_2[\sqrt{2}] & \text{ if }d = 8.
\end{cases}
\end{equation}
For primes $\pp$ and $\qq$, the splitting criterion is somewhat different. We may again use the auxiliary extension $A = \QQ(\Sd, \sqrt{\oalpha})$ from proof of Proposition \ref{unramified}. We have $p = w\ow$ with $w$ dividing $\alpha$, so $\pp$ splits completely in $L_1/K$ if and only if $w$ splits in $A/\QQd$. We use a quadratic residue symbol in $\QQd$ to detect this, i.e., $w$ splits in $A/\QQd$ if and only if
\begin{equation}\label{eqp}
\left(\frac{\oalpha}{(w)}\right) = 1.
\end{equation}
Similarly, the prime $\qq$ splits completely in $L_1/K$ if and only if
\begin{equation}\label{eqq}
\left(\frac{\oalpha}{(z)}\right) = 1.
\end{equation}
We will now explore the link between the quadratic residue symbols $\left(\frac{\oalpha}{(w)}\right)$ and~$\left(\frac{\oalpha}{(z)}\right)$. As $w$ and $z$ are primes of degree one over $p$ and $q$, respectively, we find that
$$
\left(\frac{\oalpha}{(w)}\right)\left(\frac{\oalpha}{(z)}\right) = \left(\frac{x-y\Sdo}{(x+y\Sdo)}\right) = \left(\frac{2x}{(x+y\Sdo)}\right) = \left(\frac{2x}{pq}\right), 
$$
where the last symbol is simply a Jacobi symbol. Using the fact that $pq = x^2 - d_0 y^2 \equiv 1\bmod 8$, we find that
$$
\left(\frac{2x}{pq}\right) = \left(\frac{|x|}{pq}\right) = \left(\frac{pq}{|x|}\right) = \left(\frac{x^2-d_0y^2}{|x|}\right) = \left(\frac{-d_0}{|x|}\right).
$$
We now make a distinction among the cases $d = -4$, $d = -8$, and $d = 8$. First suppose $d = -4$. Then $\left(\frac{-d_0}{|x|}\right) = \left(\frac{1}{|x|}\right) = 1$, and so
\begin{equation}\label{samecond}
\left(\frac{\oalpha}{(w)}\right) = \left(\frac{\oalpha}{(z)}\right).
\end{equation}
In other words, if $d = -4$, then $\pp$ splits completely in $L_1/K$ if and only if $\qq$ does. Therefore,  if $d = -4$, to ensure that $L_1$ is contained in an unramified a.f.p.\ $C_8$-extension $M_1/K$, we only need to verify that \eqref{split2} and \eqref{eqp} are satisfied. Next suppose $d = -8$. Then $\left(\frac{-d_0}{|x|}\right) = \left(\frac{2}{|x|}\right)$, and so
$$
\left(\frac{\oalpha}{(w)}\right) = \left(\frac{\oalpha}{(z)}\right) \Longleftrightarrow |x|\equiv 1, 7 \bmod 8 \Longleftrightarrow x\equiv 1, 7 \bmod 8,
$$
and this is guaranteed by \eqref{split2}. Again we conclude that $L_1$ is contained in an unramified a.f.p.\ $C_8$-extension $M_1/K$ provided that \eqref{split2} and \eqref{eqp} are satisfied. Finally, suppose $d = 8$. Then $\left(\frac{-d_0}{|x|}\right) = \left(\frac{-2}{|x|}\right)$, and so
$$
\left(\frac{\oalpha}{(w)}\right) = \left(\frac{\oalpha}{(z)}\right) \Longleftrightarrow |x|\equiv 1, 3 \bmod 8.
$$
Thus if $|x|\equiv 5, 7\bmod 8$, there is no chance that both \eqref{eqp} and \eqref{eqq} are satisfied and so $L_1$ is \textit{not} contained in an unramified a.f.p.\ $C_8$-extension $M_1/K$. Looking back at \eqref{split2}, we see that $\pt$ splits in $L_1/K$ if and only if $x$ satisfies
$$
x\equiv 1, 3 \bmod 8.
$$
Hence, assuming that \eqref{split2} holds, we find that $\left(\frac{\oalpha}{(w)}\right) = \left(\frac{\oalpha}{(z)}\right)$ if and only if
\begin{equation}\label{xpos}
x>0.
\end{equation}
As $x^2-2y^2 = pq$, we deduce that $|x|>|y\sqrt{2}|$, so that
$$
x>0 \Longleftrightarrow \alpha, \oalpha>0.
$$
In other words, the field $L_1$ cannot be contained in an unramified a.f.p.\ $C_8$-extension $M_1/K$ unless $L_1$ is totally real, i.e., unless $L_1/K$ is unramified also at the infinite places. We summarize the results of this section in the following proposition.
\begin{prop}\label{algebra}
Let $d\in\{-4, -8, 8\}$, and let $p$ and $q$ be prime numbers satisfying \eqref{pqcong} and \eqref{pqjac}. Let $w$ and $z$ be primes in $\ZZd$ satisfying \eqref{wzconditions}. Let $\alpha$ and $x$ be defined as in \eqref{alpha}. Suppose $\alpha$ satisfies \eqref{split2}. Furthermore, if $d = 8$, also suppose $x$ satisfies \eqref{xpos}. Then there is an unramified a.f.p.\ $C_8$-extension of $\QQ(\sqrt{dpq})$ containing $\QQ(\sqrt{d}, \sqrt{pq})$ if and only if
$$
\left(\frac{\oalpha}{(w)}\right) = 1.
$$
Consequently, under the assumptions above, if the equality above holds, then
$$
\rk_8\CL(dpq)\geq 1.
$$
\end{prop}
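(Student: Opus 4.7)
The plan is to assemble the ingredients already established in this section. Since Proposition \ref{unramified} shows that $L_1/K$ is unramified and cyclic of degree $4$, I will apply Lemma \ref{timestwo} (with $n = 2$) to translate the existence of an unramified $C_8$-extension $M_1/K$ containing $L_1$ into the condition that each of the three ramified primes $\pt$, $\pp$, $\qq$ of $\OO_K$ splits completely in $L_1/K$. The proof then amounts to identifying these splitting conditions with the hypotheses of the proposition.

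For $\pt$, the criterion is local at $2$ and has already been shown to be equivalent to \eqref{split2}, which is assumed. For $\pp$ and $\qq$, the auxiliary extension $A = \QQ(\Sd, \sqrt{\oalpha})$ from the proof of Proposition \ref{unramified} reduces complete splitting to the Jacobi symbol conditions \eqref{eqp} and \eqref{eqq}, namely $\left(\frac{\oalpha}{(w)}\right) = 1$ and $\left(\frac{\oalpha}{(z)}\right) = 1$, respectively. The heart of the matter is therefore to show that, under the stated hypotheses, these two symbols always take the same value, so that the single equation $\left(\frac{\oalpha}{(w)}\right) = 1$ controls both.

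This equality of symbols is exactly the computation carried out just before the statement: quadratic reciprocity, combined with \eqref{pqxy} and $pq \equiv 1 \bmod 8$, collapses the product $\left(\frac{\oalpha}{(w)}\right)\left(\frac{\oalpha}{(z)}\right)$ to $\left(\frac{-d_0}{|x|}\right)$. When $d = -4$ this equals $1$ unconditionally. When $d = 8$, the hypothesis \eqref{split2} forces $x \equiv 1, 3 \bmod 8$, and then \eqref{xpos} gives $|x| = x$, so that $\left(\frac{-2}{|x|}\right) = 1$. Either way the two Jacobi symbols agree, completing the ``if and only if.''

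The ``consequently'' assertion is then immediate from the Artin isomorphism \eqref{ArtSym}: an unramified $C_8$-extension $M_1/K$ corresponds to a surjection $\CL \twoheadrightarrow C_8$, which produces an element of order divisible by $8$ in $\CL$ and hence $\rk_8 \CL \geq 1$. The main ``obstacle'' here is purely bookkeeping, since the substantive work — the Galois-theoretic lemmas, the local analysis at $2$, and the reciprocity calculation — has already been done.
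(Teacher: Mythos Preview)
Your proposal is correct and follows precisely the paper's own argument: the proposition is stated as a summary of the preceding discussion, and you have faithfully recapitulated that discussion---Proposition \ref{unramified} plus Lemma \ref{timestwo} reduce the question to complete splitting of $\pt$, $\pp$, $\qq$ in $L_1/K$, condition \eqref{split2} handles $\pt$, the auxiliary field $A$ reduces $\pp$ and $\qq$ to the symbols \eqref{eqp} and \eqref{eqq}, and the reciprocity computation (together with \eqref{xpos} when $d=8$) shows these two symbols coincide. The final $8$-rank consequence via \eqref{ArtSym} is exactly as you describe.
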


\subsection{The splitting condition for $\pt$}\label{sectionsplit2}
We now delve a bit deeper into the meaning of condition \eqref{split2}. Let $w$ and $z$ be primes in $\ZZd$ satisfying \eqref{wzconditions}, and let $\pt$ be the prime ideal of $\ZZd$ lying above $2$. Let $t$ be a generator of $\pt$ defined by
\begin{equation}\label{deft}
t = 
\begin{cases}
1+i & \text{ if } d = -4 \\
\sqrt{2} & \text{ if } d = 8.
\end{cases}
\end{equation}
In \cite[proof of Theorem 1, p. 5]{Ste2}, Stevenhagen proved that
\begin{equation}\label{SteResult}
\left(\frac{t}{(w)}\right) = 
\begin{cases}
1 &\text{ if } w\equiv \square\bmod \pt^5\\
-1 &\text{ otherwise},
\end{cases}
\end{equation}
and likewise for $z$. If we define 
\begin{equation}\label{chipt}
\chipt(\aaa) = \left(\frac{t}{\aaa}\right)
\end{equation}
for odd prime ideals $\aaa$ in $\ZZd$ and extend multiplicatively to the group $\III(\pt)$ of fractional ideals of $\ZZd$ coprime to $\pt$, then $\chipt$ is a quadratic Hecke character on $\ZZd$. The significance of \eqref{SteResult} is twofold: first, the variables $p$ and $q$, which are inextricably linked in the definition of $\alpha$, are now separated; and second, condition \eqref{split2} can now be written in terms of the quadratic Hecke character $\chipt$ on $\ZZd$, i.e.,
\begin{equation}\label{split22}
\alpha\equiv \square\bmod \pt^5 \Longleftrightarrow \chipt((w))\chipt((z)) = 1.
\end{equation}

\subsection{Positivity condition on $x$}
The variables $p$ and $q$ are also inextricably linked in the definition of variable $x$ appearing in \eqref{alpha}. However, the positivity condition \eqref{xpos} on $x$ can be unfolded via a theorem of Fouvry and Kl\"{u}ners \cite[Proposition 6, p.2063]{FK0}. We have
$$
x>0 \Longleftrightarrow [2, pq]_4 = [pq, 2]_4,
$$
where $[\cdot, \cdot]_4$ is the symbol defined in \cite[p. 2061]{FK0}, i.e.,
$$
[2, pq]_4 = [2, p]_4[2, q]_4,
$$
where
$$
[2, p]_4=
\begin{cases}
1 &\text{if }2\text{ is a fourth power modulo }p \\
-1 &\text{if }2\text{ is a square, but not a fourth power modulo }p \\
0 &\text{otherwise}
\end{cases}
$$
and similarly for $[2, q]_4$, and
$$
[pq, 2]_4=
\begin{cases}
1 &\text{if }pq\equiv 1\bmod 16 \\
-1 &\text{if }pq\equiv 9\bmod 16 \\
0 &\text{otherwise.}
\end{cases}
$$
When $p\equiv q\equiv 1\bmod 8$, then
$$
[2, pq]_4 = [2, p]_4[2, q]_4 = \chipt((w))\chipt((z)),
$$
where $w$, $z$, and $\chipt$ are as in Section \ref{sectionsplit2}. Provided \eqref{split22} is satisfied, we deduce from the equations and  definitions above that
\begin{equation}\label{xpos2}
x>0 \Longleftrightarrow pq \equiv 1\bmod 16.
\end{equation}

\section{Strategy for the Proof of the Main Theorem}\label{Strategy}
As before, let $d\in\{-4, -8, 8\}$. The ultimate goal is to prove, in the set of fundamental discriminants $D = dpq$ satisfying $\rk_4\CL(D) = 2$, a lower bound for the density of those $D$ that also satisfy $\rk_8\CL(D) \geq 1$. Suppose $p$ and $q$ are prime numbers satisfying \eqref{pqcong}. Let $w$ and $z$ be primes in $\ZZd$ satisfying \eqref{wzconditions}, and define $\alpha$ and $x$ as in \eqref{alpha}. Set $\pp = (w)$ and $\qq = (z)$. If $d = 8$, suppose that $x>0$. We define the symbol $\ve(p, q)$ by 
\begin{equation}\label{symbol}
\ve(p, q) = \left(\frac{\oalpha}{(w)}\right) = \left(\frac{\oalpha}{(z)}\right).
\end{equation}
Recall from \eqref{xpos2} that the positivity condition on $x$ can be detected via congruence conditions on $p$ and $q$ modulo $16$. Given that $p\equiv q\equiv 1\bmod 8$, there are four choices for $(p, q) \bmod 16$. When $d<0$, the positivity condition on $x$ is irrelevant, so all four of the choices are valid; however, when $d = 8$, exactly two of the choices correspond to the condition~\eqref{xpos2}. The splitting condition at the prime $\pt$ lying above $2$ can be detected via the Hecke character $\chipt$ as in \eqref{split22}. If $\chipt(\pp) = s_1$ and $\chipt(\qq) = s_2$ with $s_1, s_2\in\{\pm 1\}$, then $\alpha \equiv \square\bmod \pt^5$ if and only if $s_1s_2 = 1$.

In light of Proposition \ref{algebra}, the asymptotic formula \eqref{basicdensity1}, and the remarks above, Theorem~\ref{mainthm} is a consequence of the following theorem.
\begin{theorem}\label{Thm1}
Let $d$ be $-4$, $-8$, or $8$. Given primes $p$ and $q$ satisfying \eqref{pqcong}, let $x$ and $\alpha$ be defined as in \eqref{alpha}. Let $r_1, r_2\in\{1, 9\}$ and, in case $d = 8$, suppose that $r_1r_2 \equiv 1\bmod 16$. Let $s_1, s_2\in\{\pm 1\}$ and suppose that $s_1s_2 = 1$. Then, as $X\rightarrow\infty$, we have 
$$
\sumsum_{\substack{pq\leq X,\ p < q \\ (p, q)\equiv (r_1, r_2) \bmod 16 \\ (\chipt(\pp), \chipt(\qq)) = (s_1, s_2) \\ p\equiv\square\bmod q \\ \ve(p, q) = 1}} 1 \sim \frac{1}{1024}\frac{X\log\log X}{\log X}.
$$
\end{theorem}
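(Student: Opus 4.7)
The plan is to linearize the indicator of $\ve(p,q) = 1$ via $\mathbf{1}_{\ve(p,q) = 1} = \tfrac{1}{2}(1 + \ve(p,q))$, decomposing the sum $S(X)$ in the theorem as
\begin{equation*}
S(X) \;=\; \tfrac{1}{2}\, S_0(X) \;+\; \tfrac{1}{2}\, S_1(X),
\end{equation*}
where $S_0(X)$ is the same sum with the condition $\ve(p,q) = 1$ removed and $S_1(X)$ has $\ve(p,q)$ inserted as a $\pm 1$-valued weight. The goal is then to produce a main-term asymptotic for $S_0(X)$ and to show that $S_1(X)$ is negligibly small compared with $X \log\log X / \log X$.

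For $S_0(X)$, the conditions surviving beyond $p \equiv q \equiv 1 \bmod 8$ and $p \equiv \square \bmod q$ are the mod $16$ residues of $p$ and $q$ and the splitting character values $\chipt(\pp) = s_1$, $\chipt(\qq) = s_2$. By \eqref{SteResult}, $\chipt$ is a quadratic Hecke character on $\ZZd$ whose value at an odd prime ideal $(w)$ is determined by $w \bmod \pt^5$, so all four of these extra constraints are congruence conditions at the prime above $2$ in $\ZZd$. I would adapt Gerth's proof of \eqref{basicdensity1} by inserting character orthogonality over $(\ZZ/16\ZZ)^\times$ and over the group detected by $\chipt$; each of the four independent binary conditions contributes an extra factor of $\tfrac{1}{2}$, yielding
\begin{equation*}
S_0(X) \;\sim\; \tfrac{1}{16} \cdot \tfrac{1}{32} \cdot \tfrac{X \log\log X}{\log X} \;=\; \tfrac{1}{512} \cdot \tfrac{X \log\log X}{\log X}.
\end{equation*}

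For $S_1(X)$, I would exploit the factorization
\begin{equation*}
\ve(p,q) \;=\; \Bigl(\tfrac{\ow}{(w)}\Bigr)\Bigl(\tfrac{\oz}{(w)}\Bigr),
\end{equation*}
in which the first factor is a univariate quadratic character in $p$, absorbed into the $p$-weights, while the second is a genuinely bivariate quadratic residue symbol in $\ZZd$. Since \eqref{wzconditions} ensures $w, z \equiv \square \bmod 4\ZZd$, quadratic reciprocity in $\ZZd$ converts $(\oz/(w))$ into $(w/(z))$ without spurious sign factors. After further linearizing the Legendre condition $p \equiv \square \bmod q$ (which splits $S_1$ into two analogous sums with bivariate quadratic characters), $S_1(X)$ reduces to double sums of the form
\begin{equation*}
\sum_{w\text{ prime}} \sum_{z\text{ prime}} \lambda_w\, \nu_z\, \Bigl(\tfrac{w}{(z)}\Bigr),
\end{equation*}
with bounded coefficients $\lambda_w, \nu_z$ supported in appropriate congruence classes at~$2$. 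The double oscillation estimate announced in the introduction, which sharpens \cite[Proposition 21.3]{FI1} and \cite[Proposition 7]{Milovic2} by replacing the $(MN)^{\epsilon}$ saving with an arbitrary power of $\log(MN)$ and whose heart is Proposition \ref{keycancellation}, together with a Vaughan-type decomposition to dispose of the prime indicators, should yield $S_1(X) \ll_A X (\log X)^{-A}$ for every $A > 0$, hence $S_1(X) = o(X \log\log X / \log X)$.

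The central obstacle is this cancellation step: the usual $(MN)^{\epsilon}$-type saving in double oscillation bounds for quadratic residue symbols is insufficient here because the main term is only of size $X \log\log X / \log X$, so we truly need a logarithmic-power saving and must carefully control the uniformity of the bound when carrying out the Vaughan decomposition over primes in the number rings $\ZZI$ and $\ZZT$. Once both ingredients are in place, combining them gives $S(X) \sim \tfrac{1}{1024} \cdot \tfrac{X \log \log X}{\log X}$, proving the theorem.
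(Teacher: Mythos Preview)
Your high-level decomposition is on the right track and essentially matches the paper: one linearizes the indicator conditions, isolates a main term, and shows the remaining weighted sums are $o(X\log\log X/\log X)$. The paper actually linearizes \emph{both} $\mathbf{1}(\ve(p,q)=1)$ and $\mathbf{1}(p\equiv\square\bmod q)$ at once, obtaining four sums indexed by $\be\in\FF_2^2$; your $S_0$ still carries the quadratic-residue constraint, so its evaluation already hides the $\be=(1,0)$ oscillation (Jacobi symbols), which you sweep into ``adapt Gerth's proof.'' This is harmless but worth noting.

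The genuine gap is in your treatment of $S_1$. The paper does \emph{not} use a Vaughan-type decomposition. Its mechanism is a hyperbola split (Proposition~\ref{PropH}): choose $Y=(\log X)^{100}$, and separate $p\le Y$ from $p>Y$. For $p>Y$ the double oscillation estimate (Proposition~\ref{PropB}, built on Proposition~\ref{keycancellation}) applies directly with the sequences $\alpha,\beta$ supported on primes, no Vaughan needed. For $p\le Y$, however, double oscillation is useless because the saving is a power of $\min(M,N)$. The paper's key step here is to recognize that, for fixed $p$, the map $\qq\mapsto \ve(p,q)$ extends to a nontrivial Hecke character $\psi_w$ on $\ZZd$ of conductor $\asymp p$ (Section~\ref{HeckeFamily}), and then to invoke a Siegel--Walfisz theorem for Hecke $L$-functions (Proposition~\ref{PropA}) to get uniform cancellation over primes $q\le X/p$ for all $p\le(\log X)^{100}$. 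Your proposal never addresses this small-$p$ regime; a Vaughan decomposition in $q$ would still produce Type~I sums requiring exactly this Siegel--Walfisz input for characters of growing conductor, which you do not supply. Without it, the argument cannot close.

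A minor point: your reciprocity step claims $(\overline z/(w))=(w/(z))$, but quadratic reciprocity in $\ZZd$ yields $(\overline z/(w))=(w/(\overline z))$ up to a sign controlled by congruences and, when $d=8$, signs of the rational parts. The paper packages this correctly via the symbol $\gamma(w,z)$ and Lemma~\ref{reciprocitylem}; the discrepancy doesn't affect the analytic argument but should be fixed.
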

Theorem \ref{Thm1} can be interpreted as follows. Classical theory of the distribution of prime numbers (see for instance \cite[Section 7.4, p.228]{MV}) gives the count of positive integers that are a product of two primes in fixed congruence classes modulo $16$, i.e., we have the asymptotic formula
\begin{equation}\label{basicdensity20}
\sumsum_{\substack{pq\leq X,\ p < q \\ (p, q)\equiv (r_1, r_2) \bmod 16}} 1 \sim \frac{1}{64}\frac{X\log\log X}{\log X}
\end{equation}
as $X\rightarrow \infty$. The conditions $\chipt(\pp) = s_1$ and $\chipt(\qq) = s_2$ can likewise be inserted without any trouble because $\chipt$ is a multiplicative character of a fixed conductor not depending on $p$ or $q$. Hence, we have
\begin{equation}\label{basicdensity2}
\sumsum_{\substack{pq\leq X,\ p < q \\ (p, q)\equiv (r_1, r_2) \bmod 16 \\ (\chipt(\pp), \chipt(\qq)) = (s_1, s_2)}} 1 \sim \frac{1}{256}\frac{X\log\log X}{\log X}
\end{equation}
as $X\rightarrow \infty$. Each of the remaining two conditions under the summation in Theorem~\ref{Thm1} can then be viewed as an event that occurs with probability one-half. Moreover, these two events are independent. To make this argument rigorous, we make use of the following formulas. Given a mathematical statement $P$, we define the indicator function of $P$ to be
$$
\ONE(P) :=
\begin{cases}
1& \text{ if }P\text{ is true} \\
0& \text{ if }P\text{ is false}.
\end{cases} 
$$
For distinct odd primes $p$ and $q$, set $\chi_p(q) := \left(\frac{p}{q}\right)$. Then we have
\begin{equation}\label{totalsumpq}
\ONE(p\equiv \square\bmod q) = \frac{1}{2}\left(1 + \chi_p(q)\right)
\end{equation}
Now we wish to generalize the character $\chipt$ to a function $\chi_2$ defined on all rational primes in a way that $\chi_2(p) = \chipt(\pp)$ for a prime $p\equiv 1\bmod 8$. We set
$$
\chi_2(p) = \frac{1}{\#\{\pp|p\}}\sum_{\pp|p}\chipt(\pp),
$$
where the sum is over all prime ideals $\pp$ in $\ZZd$ lying above $p$. With $t$ defined as in \eqref{deft}, $p$ a prime congruent to $1$ modulo $8$, and $\pp_1$ and $\pp_2$ the two primes in $\ZZd$ lying above $p$, we have
$$
\chipt(\pp_1)\chipt(\pp_2) = \left(\frac{t}{\pp_1}\right)\left(\frac{t}{\pp_2}\right) = \left(\frac{\Norm(t)}{\pp_1}\right) = \left(\frac{\Norm(t)}{p}\right) = 1,
$$
so that $\chipt(\pp_1) = \chipt(\pp_2)$. Thus indeed $\chi_2(p) = \chipt(\pp)$ whenever $p\equiv 1\bmod 8$.

Given primes $p$ and $q$, an ordered pair of integers $\br = (r_1, r_2)\in\{1, 9\}\times\{1, 9\}$, and an ordered pair of integers $\bs = (s_1, s_2)\in\{\pm 1\}\times\{\pm 1\}$, set
$$
\bc(p, q; \br, \bs) := \ONE\left((p, q)\equiv \br\bmod 16\text{ and }(\chi_2(p), \chi_2(q)) = \bs\right).
$$
Now let $p$ and $q$ be distinct primes, let $\br$ and $\bs$ be as above, and suppose that $s_1s_2 = 1$, and if $d = 8$, also that $r_1r_2\equiv 1\bmod 16$. Then we have
\begin{align*}
\ONE((p, q)\equiv \br\bmod 16,\ (\chi_2(p), \chi_2(q)) = \bs&,\text{ and }\ve(p, q) = 1) \\
& = \bc(p, q; \br, \bs)\cdot \frac{1}{2}(1+\ve(p, q)).
\end{align*}
Finally, given a vector $\be = (e_1, e_2)\in\FF_2^2$ and $p$, $q$, $\br$, and $\bs$ as above, define
\begin{equation}\label{deffpqbrbsbe}
f(p, q) = f(p, q; \br, \bs, \be) := \bc(p, q; \br, \bs)\chi_p(q)^{e_1}\ve(p, q)^{e_2}.
\end{equation}
Then, putting together the formulas above, we deduce that
$$
\sumsum_{\substack{pq\leq X,\ p < q \\ (p, q)\equiv \br \bmod 16 \\ (\chi_2(p), \chi_2(q)) = \bs \\ p\equiv\square\bmod q \\ \ve(p, q) = 1}} 1 = \frac{1}{4}\sum_{\be\in\FF_2^2}\sumsum_{\substack{pq\leq X \\ p < q }} f(p, q; \br, \bs, \be)
$$
whenever $\bs$ satisfies $s_1s_2 = 1$ and, if $d = 8$, $\br$ satisfies $r_1r_2\equiv 1\bmod 16$. If $\be = (0, 0)$, then, as we noted above in \eqref{basicdensity2}, we have
$$
\sumsum_{\substack{pq\leq X,\ p < q }} f(p, q; \br, \bs, \be) \sim \frac{1}{256}\frac{X\log\log X}{\log X}
$$
as $X\rightarrow \infty$. Hence Theorem \ref{Thm1} follows from the following oscillation statement.
\begin{theorem}\label{Thm2}
Let $\br = (r_1, r_2)\in\{1, 9\}\times\{1, 9\}$ be such that $r_1r_2\equiv 1\bmod 16$ if $d = 8$, let $\bs = (s_1, s_2)\in\{\pm 1\}\times\{\pm 1\}$ be such that $s_1s_2 = 1$, let $\be\in\FF_2^2$, and let $f(p, q; \br, \bs, \be)$ be defined as in \eqref{deffpqbrbsbe}. If $\be \neq (0, 0)$, then
$$
\sumsum_{\substack{pq\leq X,\ p < q }} f(p, q; \br, \bs, \be) = o\left(\frac{X\log\log X}{\log X}\right)
$$
as $X\rightarrow \infty$.
\end{theorem}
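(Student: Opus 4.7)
The plan is to reduce Theorem \ref{Thm2} to bilinear character sum estimates in the ring $\ZZd$ via a standard Vaughan-type decomposition of the primality indicator. The case $\be=(1,0)$ is the easiest: since $p\equiv q\equiv 1\bmod 4$, quadratic reciprocity gives $\chi_p(q)=\chi_q(p)$, and the oscillation of $(p,q)\mapsto \chi_p(q)$ over primes in fixed congruence classes follows from classical bilinear sieve arguments (Vaughan/Heath-Brown identity combined with Poly\'a--Vinogradov). The remaining two cases $\be=(0,1)$ and $\be=(1,1)$ involve the symbol $\ve(p,q)$, and these constitute the technical heart of the theorem.

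For these cases, I would first factor $\ve(p,q)=\left(\frac{\ow}{(w)}\right)\left(\frac{\oz}{(w)}\right)$ using \eqref{symbol}, absorbing the single-variable factor $\left(\frac{\ow}{(w)}\right)$ into a modified, bounded-conductor multiplicative function on $p$. Then I would lift the sum from rational primes $p,q$ (automatically split in $\ZZd$ by \eqref{pqcong}) to a sum over prime elements $w,z\in\ZZd$ with fixed residues modulo a bounded ideal (coming from \eqref{wzconditions}, $\bc$, and $\chipt$). This reduces the problem to showing cancellation in
$$
\sumsum_{\substack{w,z\in \ZZd\\ \Norm(wz)\leq X}} a_w b_z \left(\frac{\oz}{(w)}\right),
$$
where $a_w,b_z$ are prime-supported sequences with bounded-conductor twists, and when $e_1=1$ there is an additional Jacobi factor $\chi_p(q)$, which I would absorb by another application of reciprocity in $\ZZd$.

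Next I would apply a Heath-Brown-style identity in $\ZZd$ to decompose the prime indicator into sums of Type I (one variable supported on a smooth sequence) and Type II (bilinear) shape. Type I sums are handled by single-variable oscillation of the quadratic residue symbol, reducing via Poisson summation in $\ZZd$ to a character sum over a box, bounded by a Poly\'a--Vinogradov estimate in the ring. Type II sums are the crucial piece: after Cauchy--Schwarz and the flip provided by quadratic reciprocity in $\ZZd$, they reduce to a character sum of the form estimated by the forthcoming Proposition \ref{keycancellation}, which is the promised double oscillation statement with the $(MN)^{\epsilon}$ savings sharpened to an arbitrary negative power of $\log(MN)$.

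The main obstacle, and the reason classical estimates do not suffice, is precisely this $\log$-power savings. The target bound $o(X\log\log X/\log X)$ beats the trivial bound by only a factor of $\log\log X$, so any $(MN)^{\epsilon}$-type loss would be fatal. Consequently, one must (i) choose Vaughan parameters very carefully so that the Type I range reaches a power of $X$ while the Type II range is genuinely bilinear of dyadic shape $MN\asymp X$ with both $M,N$ intermediate, (ii) maintain all intermediate bounds with explicit $\log^{-A}$ savings uniformly in the auxiliary congruence conditions, and (iii) ensure that the reciprocity-twists and the residue symbol against $\oz$ combine, after Cauchy--Schwarz in $w$, to a character sum in $\ZZd$ of the precise form to which Proposition \ref{keycancellation} applies. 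Granting Proposition \ref{keycancellation} together with the Siegel--Walfisz theorem in $\ZZd$ for the small-moduli remainders, the combination of Type I and Type II bounds yields Theorem \ref{Thm2}.
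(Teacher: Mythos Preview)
Your proposal contains the right analytic ingredients but overlays them with unnecessary machinery. A Vaughan/Heath-Brown decomposition of the prime indicator is not needed: the sum is already bilinear in two prime variables, so there is no combinatorial obstruction to treating it directly. The paper proceeds via a simple hyperbola split at $Y=(\log X)^{100}$ (Proposition~\ref{PropH}). For $p\leq Y$, the key observation---which your proposal relegates to ``small-moduli remainders''---is that $\qq\mapsto \chi_p(q)^{e_1}\ve(p,q)^{e_2}$ extends to a non-trivial Hecke character on $\ZZd$ of conductor $\ll p^2$ (Section~\ref{HeckeFamily} and Lemma~\ref{linkjacgamma}), so Siegel--Walfisz for Hecke $L$-functions gives $\sum_{q\leq X} f(p,q)\ll X\exp(-c\sqrt{\log X})$ uniformly in $p\leq Y$ (Proposition~\ref{PropA}); no Poisson/P\'olya--Vinogradov step is involved. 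For $p,q>Y$, the bilinear estimate of Proposition~\ref{generalBsum} is applied directly to the prime-supported sequences, yielding a genuine power saving $\Delta^{-11/12}(M^{-1/12}+N^{-1/12})\Delta^2 MN$ (Proposition~\ref{PropB}). Your concern that only $\log^{-A}$ savings are available is too pessimistic: Proposition~\ref{keycancellation} is an exact identity, and after Cauchy--Schwarz, H\"older with exponent $6$, and the reciprocity flip of Lemma~\ref{reciprocitylem}, it produces power savings in both $M$ and $N$. The two routes share the same analytic core; the paper's simply avoids the combinatorics of a Vaughan identity in $\ZZd$ and makes the role of Siegel--Walfisz (handling the entire regime $p\leq Y$, not just residual terms) transparent.
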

The rest of the paper is devoted to proving Theorem \ref{Thm2}.
 
\subsection{Summing under a hyperbola}\label{hyperbola}
We now describe how to handle sums of the form
\begin{equation}\label{SXf}
S(X; f) := \sumsum_{\substack{pq\leq X,\ p<q}}f(p, q),
\end{equation}
where $f:\ZZ\times\ZZ\rightarrow \CC$ is supported on pairs of prime numbers. The goal is to give good upper bounds for $S(X; f)$ when $f$ oscillates. Let $Y$ be a positive real number. Then
\begin{equation}\label{SABf}
S(X; f) = A(X, Y; f)+B(X, Y; f),
\end{equation}
where
\begin{equation}\label{Af}
A(X, Y; f) := \sumsum_{\substack{pq\leq X,\ p<q \\ p\leq Y}}f(p, q),
\end{equation}
and
\begin{equation}\label{Bf}
B(X, Y; f) := \sumsum_{\substack{pq\leq X\\ q>p> Y}}f(p, q).
\end{equation}
Usually $Y$ is chosen small enough compared to $X$ so that the sum $A(X, Y; f)$ can be handled using the Siegel-Walfisz theorem and variations thereof. Bounding the sum $B(X, Y; f)$ then usually proceeds by proving a double-oscillation theorem for $f$, and this type of theorem is generally useful only when $Y$ is not too small. We make these techniques precise in the following proposition.    
\begin{prop}\label{PropH}
Let $X>1$ be a real number, let $f:\ZZ\times\ZZ\rightarrow \CC$ be a function satisfying $\|f\|_{\infty}\leq 1$, and let $S(X; f)$ be defined as in \eqref{SXf}. Let $Y$ be a real number satisfying $1<Y<X^{\frac{1}{4}}$. Suppose that there exist positive real numbers $\delta_1$, $\delta_2$, and~$\delta_3$ satisfying $\delta_3<\delta_2$ such that
\begin{equation}\tag{A}
A_p(X; f) := \sum_{q\leq X}f(p, q) \ll XY^{-\delta_1}
\end{equation}
for all $p\leq Y$, where the implied constant is absolute, and such that 
\begin{equation}\tag{B}
\BB(M, N; f, \Delta) := \sumsum_{\substack{M<p\leq M+\Delta M \\ N<q\leq N+\Delta N}}f(p, q) \ll \Delta^{-\delta_2}\left(M^{-\delta_3}+N^{-\delta_3}\right)\Delta^2 MN,
\end{equation}
for all $M, N>1$ and $\Delta\in(0, 1)$ satisfying $\Delta M > M^{\frac{1}{2}}$, $\Delta N > N^{\frac{1}{2}}$, where the implied constant is absolute. Then there exists a positive real number $\delta$ in $(0, 1)$ such that
$$
S(X; f)\ll Y^{-\delta}X\log X,
$$
where the implied constant is absolute. Moreover, we can take
$$
\delta = \min\left(\frac{\delta_1}{2}, \frac{\delta_3}{2\delta_2}, \frac{\delta_3}{2}\right).
$$
\end{prop}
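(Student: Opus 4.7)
The plan is to split $S(X; f) = A(X, Y; f) + B(X, Y; f)$ as in \eqref{SABf} and treat the two ranges separately: the small-$p$ piece $A$ using hypothesis (A), and the large-$p$ piece $B$ via a multiplicative partition of the hyperbolic region into small boxes, on each of which hypothesis (B) is applied, with the partition parameter chosen to balance the interior saving against the trivial contribution from boxes straddling the boundary $pq = X$.

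For $A(X, Y; f)$, I would fix a prime $p \leq Y \leq X^{1/4}$ and write the inner sum as $A_p(X/p; f) - \sum_{q \leq p} f(p, q)$. Applying hypothesis (A) with $X$ replaced by $X/p$ bounds the first term by $O((X/p) Y^{-\delta_1})$, while $\|f\|_\infty \leq 1$ gives the trivial bound $O(p)$ for the second. Summing over primes $p \leq Y$ yields $A(X, Y; f) \ll XY^{-\delta_1} \log\log Y + Y^2$, which, using $Y \leq X^{1/4}$, is absorbed into $O(Y^{-\delta_1/2} X \log X)$.

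For $B(X, Y; f)$, I would set $c = Y^{-\delta_3/(2\delta_2)}$; the assumption $\delta_3 < 2\delta_2$ makes $c \in (0, 1)$ and ensures $cM \geq Y^{1 - \delta_3/(2\delta_2)} > 1$ for all $M \geq Y$, so hypothesis (B) applies with $\Delta = c$. Cover the region $\{Y < p < q,\ pq \leq X\}$ by boxes $(M, (1+c)M] \times (N, (1+c)N]$ indexed by $(j, k)$ with $M = (1+c)^j$, $N = (1+c)^k$, and $M, N \geq Y$. For \emph{interior} boxes, i.e.\ those fully contained in the region, hypothesis (B) bounds the contribution of each box by $c^{2-\delta_2}(M^{-\delta_3}+N^{-\delta_3}) MN \ll c^{2-\delta_2} Y^{-\delta_3} MN$; summing the geometric series $\sum_{j + k \leq \log_{1+c} X} (1+c)^{j+k} \ll X \log X / c^2$ then produces a total interior contribution of $O(c^{-\delta_2} Y^{-\delta_3} X \log X) = O(Y^{-\delta_3/2} X \log X)$. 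The remaining \emph{boundary} boxes, namely those straddling $pq = X$ or lying along the diagonal $p = q$, number $O((\log X)/c)$ in total, and each contributes at most $c^2 MN \ll c^2 X$ trivially, for a total of $O(cX\log X) = O(Y^{-\delta_3/(2\delta_2)} X \log X)$. Combining all estimates gives $S(X; f) \ll Y^{-\delta} X \log X$ with $\delta = \min(\delta_1/2,\ \delta_3/(2\delta_2),\ \delta_3/2)$.

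The main obstacle is the optimization of the partition parameter $c$: a finer partition strengthens the cancellation obtained from hypothesis (B) in the interior but worsens the trivial boundary bound, and the specific choice $c = Y^{-\delta_3/(2\delta_2)}$ balances the two. The constraint $\delta_3 < 2\delta_2$ is precisely what allows this balanced $c$ to remain larger than $1/Y$, so that the size condition $\Delta M > 1$ of hypothesis (B) is satisfied on every box in the partition.
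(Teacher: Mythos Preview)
Your proposal is correct and follows the same strategy as the paper: split $S=A+B$, handle $A$ via hypothesis (A), and tile the hyperbolic region for $B$ into multiplicative boxes of ratio $1+\Delta$ with $\Delta=Y^{-\delta_3/(2\delta_2)}$, applying hypothesis (B) on interior boxes and a trivial bound on boundary boxes (the paper phrases the latter as a Lipschitz area estimate for $\TTT(X)\setminus\RRR(X)$, which is equivalent to your box count).

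One small slip: your claim that $Y^2$ is absorbed into $O(Y^{-\delta_1/2}X\log X)$ requires $Y^{2+\delta_1/2}\ll X$, which fails for $\delta_1\geq 4$ since $Y$ may be as large as $X^{1/4}$. The paper avoids this by bounding $A_p(p;f)\ll pY^{-\delta_1}$ via hypothesis (A) rather than trivially, giving $Y^{2-\delta_1}$ instead of $Y^2$. Alternatively, since the final $\delta$ satisfies $\delta\leq \delta_3/(2\delta_2)<1$, one has $Y^2<Y^3<X^{3/4}\ll Y^{-\delta}X\log X$, so your trivial bound still suffices for the stated conclusion; just route the absorption through $Y^{-\delta}$ rather than $Y^{-\delta_1/2}$.
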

\begin{proof}
With $A(X, Y; f)$ defined as in \eqref{Af}, using hypothesis (A), we deduce that
\begin{equation}\label{boundAf}
\begin{array}{rcl}
A(X, Y; f) & = & \displaystyle{\sum_{p\leq Y}\left(A_p(X/p; f)-A_p(p; f)\right)} \\
& \ll & \displaystyle{\sum_{p\leq Y} \left(X p^{-1} Y^{-\delta_1} + p Y^{-\delta_1}\right)} \\
& \ll & \displaystyle{Y^{-\delta_1}X\log\log Y + Y^{2-\delta_1}} \\
& \ll & \displaystyle{XY^{-\delta_1/2}.}
\end{array}
\end{equation}
Let $\Delta = Y^{-\frac{\delta_3}{2\delta_2}}$. For each $k\geq 0$, set $M_k = N_k = Y(1+\Delta)^k$. Let $\RRR(X)$ be the region in $\RR^2$ defined by
$$
\RRR(X) = \left\{(x, y)\in \RR^2: x\geq Y, xy\leq X(1+\Delta)^{-2}, x(1+\Delta)\leq y \right\},
$$
and let $\Sigma(X)$ be the subset of $\ZZ_{\geq 0}^2$ defined by
$$
\Sigma(X) = \left\{(j, k)\in \ZZ_{\geq 0}^2: (M_j, N_k)\in\RRR(X)\right\}.
$$
If $(j , k)\in \Sigma(X)$, then the box $[M_j, M_{j+1}]\times[N_k, N_{k+1}]$ is completely contained in the region
$$
\TTT(X) = \left\{(x, y)\in \RR^2: x\geq Y, xy\leq X, x\leq y \right\}.
$$
Let $B(X, Y; f)$ be the sum defined in \eqref{Bf}. Then we can partition $B(X, Y; f)$ as
\begin{equation}\label{SBRf}
B(X, Y; f) = \sumsum_{(j, k)\in\Sigma(X)}\BB(M_j, N_k; f, \Delta) + R(X, Y; f, \Delta).
\end{equation}
As $\|f\|_{\infty}\leq 1$, we give a trivial upper bound for $R(X, Y; f, \Delta)$ by counting lattice points in the region $\TTT(X)\setminus\RRR(X)$, i.e.,
$$
\left|R(X, Y; f, \Delta)\right| \leq \#\left(\ZZ^2\cap\left(\TTT(X)\setminus\RRR(X)\right)\right).
$$
The right-hand side above can be approximated by the area of the region $\TTT(X)\setminus\RRR(X)$, with an error term bounded by the sum of the lengths of the projections of $\TTT(X)\setminus\RRR(X)$ to the axes (this is known as the Lipschitz principle; see \cite{Dav} and \cite{DavC}). Thus we have
\begin{equation}\label{boundRf}
\begin{array}{rcl}
R(X, Y; f, \Delta) & \ll & \displaystyle{\int_{0}^{X^{\frac{1}{2}}}\Delta x dx + \int_{Y}^{X^{\frac{1}{2}}}\frac{\left(X-X/(1+\Delta)^2\right)}{x}dx + X^{\frac{1}{2}} + XY^{-1} + 1}\\
& \ll & \displaystyle{\Delta X + X\frac{2\Delta+\Delta^2}{(1+\Delta)^2}\log\left(\frac{X^{\frac{1}{2}}}{Y}\right)+ X^{\frac{1}{2}} + XY^{-1} + 1} \\
& \ll & \displaystyle{\Delta X + \Delta X\log X + XY^{-1}} \\
& \ll & \displaystyle{Y^{-\frac{\delta_3}{2\delta_2}}X\log X + Y^{-1}X}.
\end{array} 
\end{equation}
\\\\
As $\delta_3<\delta_2$ and $M_j, N_k\geq Y$, we have $\Delta M_j> M_j^{\frac{1}{2}}, \Delta N_k > N_k^{\frac{1}{2}}$. Thus we can use hypothesis (B) to give the bound
\begin{equation}\label{boundSBf}
\begin{array}{rcl}
\displaystyle{\sumsum_{(j, k)\in\Sigma(X)}\BB(M_j, N_k; f, \Delta)} & \ll & \displaystyle{\Delta^{-\delta_2}Y^{-\delta_3}\sumsum_{(j, k)\in\Sigma(X)}\Delta^2M_jN_k} \\
& \ll & \displaystyle{Y^{-\frac{\delta_3}{2}}\cdot \Area\TTT(X)}\\
& \ll & \displaystyle{Y^{-\frac{\delta_3}{2}}X\log X}.
\end{array}
\end{equation}
Combining \eqref{boundAf}, \eqref{boundRf}, and \eqref{boundSBf}, we deduce the proposition.
\end{proof}

To apply Proposition \ref{PropH}, we will prove the following two propositions. In the following, define $f(p, q; \br, \bs, \be)$ as in Theorem \ref{Thm2}, and suppose $\be\neq(0, 0)$.
\begin{prop}\label{PropA}
Let $f(p, q) = f(p, q; \br, \bs, \be)$. Then there is a constant $c>0$ such that for all $p\leq \left(\log X\right)^{100}$, we have
$$
A_p(X; f) = \sum_{q\leq X}f(p, q) \ll X\exp\left(-c(\log X)^{\frac{1}{4}}\right),
$$
where the implied constant is absolute (but ineffective).
\end{prop}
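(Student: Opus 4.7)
Fix $p\leq(\log X)^{100}$. If $p\not\equiv r_1\bmod 16$ or $\chi_2(p)\neq s_1$, then $\bc(p,q;\br,\bs)=0$ for every $q$ and the bound is trivial; we may thus assume these hold, which forces $p\equiv 1\bmod 8$, and we fix a prime $w$ of $\ZZd$ above $p$ satisfying \eqref{wzconditions}. The overarching plan is to realize $q\mapsto f(p,q;\br,\bs,\be)$ (for this fixed $p$) as the values of a non-principal Hecke character $\chi$ on degree-one prime ideals $(\oz)$ of $\ZZd$ (where $q=z\oz$) of conductor of norm $O(p)$, and then to invoke the Siegel--Walfisz theorem for Hecke $L$-functions over $\QQd$.

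First I would decompose $f$ into Hecke characters. For each $q$ with $\bc=1$, we have $q\equiv 1\bmod 8$, so $q=z\oz$ splits in $\ZZd$ with $z$ normalized by \eqref{wzconditions}. The conditions $q\equiv r_2\bmod 16$ and $\chipt((z))=s_2$ are detected by a finite linear combination of values of Hecke characters of conductor supported at $\pt$. By quadratic reciprocity in $\QQ$ (using $p\equiv 1\bmod 4$), $\chi_p(q)^{e_1}=\left(\frac{q}{p}\right)^{e_1}$ is the $e_1$-th power of a non-principal Dirichlet character modulo $p$ evaluated at $\Norm(\oz)$. Finally, since $\oalpha=\ow\oz$, we have $\varepsilon(p,q)^{e_2}=\left(\frac{\oalpha}{(w)}\right)^{e_2}=\left(\frac{\ow}{(w)}\right)^{e_2}\left(\frac{\oz}{(w)}\right)^{e_2}$, where the first factor is a constant depending only on $p$. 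Assembling these pieces gives $f(p,q;\br,\bs,\be)=c_p\cdot\chi((\oz))$, where $|c_p|=1$ and $\chi$ is a Hecke character of $\QQd$ whose conductor divides $(w)\pt^k$ for a small absolute $k$, hence has norm $O(p)$.

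The central algebraic task is to verify that $\chi$ is non-principal whenever $\be\neq(0,0)$. When $e_2=0$ and $e_1=1$, the restriction of $\chi$ to principal degree-one prime ideals reduces to the Legendre symbol $(\,\cdot\,/p)$, which is non-principal. When $e_1=0$ and $e_2=1$, the restriction to ideals coprime to $(w)$ factors through $(\ZZd/(w))^{\times}\cong\FF_p^{\times}$ as the non-trivial quadratic character. The delicate case is $e_1=e_2=1$: one has to unfold both contributions via quadratic reciprocity in $\ZZd$ and rule out an accidental cancellation between the rational Legendre symbol and $\left(\frac{\cdot}{(w)}\right)$ on the ray class group modulo $(w)\pt^k$. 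This verification is the chief obstacle of the proof; with $p\equiv 1\bmod 8$, a short case-check at the level of $(\ZZd/(w)\pt^k)^{\times}$ should suffice.

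Given the non-principality of $\chi$, the Siegel--Walfisz theorem for Hecke $L$-functions of $\QQd$ (derived from standard zero-free regions, with the constant made ineffective by Siegel's theorem) yields
$$\sum_{\Norm(\qq)\leq X}\chi(\qq)\Lambda_{\QQd}(\qq)\ll X\exp\bigl(-c\sqrt{\log X}\bigr),$$
uniformly for characters of conductor of norm $\leq(\log X)^A$, here with $A=101$. Stripping off inert primes and prime-power contributions (together $O(\sqrt{X})$), passing from $\Lambda_{\QQd}$ to indicator functions of primes by partial summation, recombining the finitely many characters arising from the expansion of $\bc$, and using $|c_p|=1$ then give the stated estimate on $A_p(X;f)$ with absolute but ineffective implied constant.
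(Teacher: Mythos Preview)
Your approach is essentially the paper's, and it is correct in outline. Two points deserve comment.

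First, the conductor bound you state is not quite right. When $e_1=1$ and $e_2=0$, the Hecke character $\qq\mapsto\left(\frac{\Norm(\qq)}{p}\right)$ on $\ZZd$ has conductor $(p)=(w)(\ow)$, not a divisor of $(w)\pt^k$; its norm is $p^2$, not $O(p)$. This is harmless for the Siegel--Walfisz input (take $A=201$ rather than $101$), but your sentence ``whose conductor divides $(w)\pt^k$'' should be amended.

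Second, the case $e_1=e_2=1$ that you flag as the ``chief obstacle'' is handled in the paper by a one-line identity rather than a case-check on the ray class group. Lemma~\ref{linkjacgamma} gives
\[
\left(\frac{\Norm(z)}{p}\right)\psi_w(\qq)=\psi_{\ow}(\qq),
\]
so twisting $\psi_w$ by the Legendre symbol simply yields the Hecke character attached to the conjugate prime $\ow$, which is again non-principal of conductor $(w)$. This immediately disposes of the ``accidental cancellation'' you were worried about and makes the non-principality uniform in all three cases $\be\neq(0,0)$.
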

\begin{prop}\label{PropB}
Let $f(p, q) = f(p, q; \br, \bs, \be)$. Then, for all $M, N > 1$ and $\Delta\in(0, 1)$ satisfying $\Delta M, \Delta N > 1$, we have
$$
\BB(M, N; f, \Delta) = \sumsum_{\substack{M<p\leq M+\Delta M \\ N<q\leq N+\Delta N}}f(p, q) \ll \Delta^{-\frac{11}{12}}\left(M^{-\frac{1}{12}}+N^{-\frac{1}{12}}\right)\Delta^2 MN,
$$
where the implied constant is absolute.
\end{prop}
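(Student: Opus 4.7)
The proof will follow the classical Heilbronn double oscillation scheme, carried out in $\ZZd$, with the log-power character-sum cancellation of Proposition \ref{keycancellation} playing the role that the $(MN)^{\epsilon}$-type estimate plays in \cite{FI1} and \cite{Milovic2}.

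First, I would lift the summation from rational primes $(p, q)$ to pairs $(w, z)$ of primes of $\ZZd$ above $p$ and $q$ --- possible because $p \equiv q \equiv 1\bmod 8$ forces both to split. The congruence conditions packaged in $\bc(p, q; \br, \bs)$ decouple multiplicatively as $c_1(w)c_2(z)$, since $p\bmod 16$ and $\chi_2(p) = \chipt((w))$ depend only on $w$. Rational quadratic reciprocity rewrites $\chi_p(q)^{e_1}$ as a product of $\ZZd$-residue symbols of the form $(w/(z))(\ow/(z))$, while $\ve(p, q)^{e_2} = ((\ow\oz)/(w))^{e_2}$ is already in this bilinear form. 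Absorbing the ``self-symbols'' $(\ow/(w))$ and $(\oz/(z))$ into functions $\kappa_1(w)$ and $\kappa_2(z)$, the summand takes the shape
\begin{equation*}
f(p, q) \;=\; c_1(w)\,c_2(z)\,\kappa_1(w)\,\kappa_2(z)\cdot\left(\tfrac{A(z)}{(w)}\right)^{\!b}\!\left(\tfrac{B(w)}{(z)}\right)^{\!a}
\end{equation*}
for some $(a, b)\in\FF_2^2\setminus\{(0, 0)\}$ determined by $\be$ and monomials $A, B$ in the primes and their conjugates.

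Next, I would apply Cauchy--Schwarz in one of the two variables --- say $z$ --- after replacing the primality indicator on $z$ by a smooth non-negative majorant supported on $\BB_N = \{z\in\ZZd : N<\Norm(z)\le N(1+\Delta)\}$ (to avoid the $\epsilon$-loss from Möbius inversion). Opening the square and swapping the order of summation yields
\begin{equation*}
|\BB(M, N; f, \Delta)|^{2}\ll\Delta N\sumsum_{w_{1}, w_{2}\in\BB_{M}}\kappa_{1}(w_{1})\kappa_{1}(w_{2})\sum_{z\in\BB_{N}}\chi_{w_{1}, w_{2}}(z),
\end{equation*}
where $\chi_{w_1, w_2}$ is the quadratic residue character corresponding (by multiplicativity of the Jacobi symbol in its denominator) to $(\cdot/(w_1 w_2))$ composed with the monomial evaluation. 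For $w_1\ne w_2$, the modulus $(w_1 w_2)$ has norm $\asymp M^2$ and $\chi_{w_1, w_2}$ is non-principal, so Proposition \ref{keycancellation} gives $\sum_{z\in\BB_N}\chi_{w_1, w_2}(z)\ll_A \Delta N(\log MN)^{-A}$ for any $A>0$. The diagonal $w_1 = w_2$ contributes trivially $(\Delta M)(\Delta N)$. Reversing the roles of $w$ and $z$ yields the analogous bound with $M$ and $N$ interchanged. A suitable interpolation of these two applications à la Heilbronn --- balancing the diagonal contribution, the off-diagonal log-power savings, and the $\Delta^{-1}$ losses per application of Cauchy--Schwarz --- then produces the target estimate with exponents $\delta_2 = 11/12$ and $\delta_3 = 1/12$, tuned precisely so that Proposition \ref{PropH} with $Y = (\log X)^{100}$ delivers the required $o(X\log\log X/\log X)$ saving.

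The main obstacle is confirming that $\chi_{w_1, w_2}$ is genuinely non-principal of conductor $\asymp M^2$ for all but a negligible set of pairs $(w_1, w_2)$; this requires careful bookkeeping of the genus factors introduced by reciprocity in $\ZZd$ --- and, in the case $d = 8$, of the infinite-place conditions forcing totally positive generators. A secondary subtlety, arising when $\be = (1, 0)$ so that only $\chi_p(q)$ appears, is verifying that the combined factor $(w/(z))(\ow/(z))$ does not collapse (after unit normalization) to a principal character in the Cauchy--Schwarz expansion. Once these points are settled, the proof is an assembly of standard Heilbronn-type ingredients, with the decisive input being the arbitrary-power-of-$\log$ cancellation of Proposition \ref{keycancellation}, whose role is precisely to overcome the $(\log MN)^{O(1)}$ factors from the smooth majorant that would otherwise force an $\epsilon$-loss.
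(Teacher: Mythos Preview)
Your proposal contains a genuine and fatal misreading of Proposition~\ref{keycancellation}. That proposition evaluates the \emph{complete} sum $\sum_{z\in\ZZd/(W)}\gamma(w_1,z)\gamma(w_2,z)$ exactly: it is zero unless both $W=\Norm(w_1w_2)$ and $r=\Norm(\gcd((w_1),(\ow_2)))$ are perfect squares. There is no statement about incomplete sums over a box $\BB_N$, and certainly nothing of the form $\ll_A \Delta N(\log MN)^{-A}$. In the paper the complete-sum vanishing is combined with the Lipschitz principle to bound the incomplete sum $R(N;w_1,w_2)$ by $W(\Delta N)^{1/2}+W^2$ off the ``square locus'' and by $\Delta N + W(\Delta N)^{1/2}+W^2$ on it. The saving is therefore a \emph{power} saving coming from the sparsity of pairs $(w_1,w_2)$ with $\Norm(w_1w_2)$ a square, together with the polynomial-in-$W$ error; no logarithms enter at all. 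Your claim that log-power cancellation ``produces the target estimate with exponents $\delta_2=11/12$ and $\delta_3=1/12$'' is structurally impossible: no amount of log saving can yield a power of $M$ or $N$.

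Even with the correct reading of Proposition~\ref{keycancellation}, a single Cauchy--Schwarz is not enough. Lemma~\ref{generalQsum1} (Cauchy--Schwarz plus Proposition~\ref{keycancellation}) gives only $Q\ll 2^f\Delta^{3/2}(M^{1/2}N+M^2N^{3/4}+M^3N^{1/2})$, which is useless when $M$ and $N$ are comparable. The paper then applies H\"older with exponent $6$ (Lemma~\ref{generalQsum2}) to amplify the $z$-sum before invoking Lemma~\ref{generalQsum1}, obtaining $M^{11/12}N+\cdots$; only after that does one swap $M\leftrightarrow N$ via the reciprocity law of Lemma~\ref{reciprocitylem} (which requires splitting into residue classes modulo $8\ZZd$ and, for $d=8$, fixing signs) and take the minimum of the two bounds to reach $\Delta^{-11/12}(M^{-1/12}+N^{-1/12})\Delta^2 MN$. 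Your outline omits the H\"older step entirely and so cannot reach the stated exponents.

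Finally, the case $\be=(1,0)$ is not handled by this machinery at all: since $e_2=0$, the symbol $\gamma$ never appears and the sum is a genuine bilinear sum in the ordinary Jacobi symbol $\left(\frac{p}{q}\right)$, which the paper dispatches by citing Heath--Brown's large-sieve bounds~\cite{HB}. When $e_1=e_2=1$, the identity $\left(\frac{\Norm(w)}{\Norm(z)}\right)\gamma(w,z)=\gamma(\ow,z)$ from Lemma~\ref{linkjacgamma} reduces to the case $e_1=0$, $e_2=1$ with $w$ replaced by $\ow$; your decomposition into separate factors $(\cdot/(w))^b(\cdot/(z))^a$ does not exploit this and would leave you verifying non-principality case by case, which is exactly the ``main obstacle'' you flag but do not resolve.
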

Hence, assuming Propositions \ref{PropA} and \ref{PropB}, we can apply Proposition \ref{PropH} with $Y = \left(\log X\right)^{100}$, $\delta_1 = 1$, $\delta_2 = \frac{11}{12}$, and $\delta_3 = \frac{1}{12}$ to obtain Theorem \ref{Thm2}. Our goal is now to prove Propositions \ref{PropA} and \ref{PropB}.

\section{Preliminaries on Quadratic Characters in Quadratic Fields}\label{Preliminaries}
In this section, we prove some properties of quadratic residue symbols in quadratic number fields. The most important among them is Proposition~\ref{keycancellation} below, which is a generalization of \cite[Lemma 21.1, p.\ 1025]{FI1}. We have made an effort to make our proof more conceptually clear. Throughout this section, let $L$ denote the quadratic number field of discriminant $D$, and let $\OO_L = \ZZ[(D+\sqrt{D})/2]$ denote its maximal order.

\subsection{Primitivity}
We say that an ideal $\aaa$ in $\OO_L$ is \textit{primitive} if $\gcd(\aaa, \oaaa) = 1$. Furthermore, we say that an element $w\in\OO_L$ is \textit{primitive} if the principal ideal generated by $w$ is primitive. If $\aaa$ is primitive, then all prime ideals dividing $\aaa$ are unramified of degree one, and so the inclusion $\ZZ\hookrightarrow \OO_L$ induces an isomorphism
\begin{equation}\label{primitive1}
\ZZ/(\Norm(\aaa))\stackrel{\sim}{\longrightarrow} \OO_L/\aaa. 
\end{equation}
We call an ideal $\aaa$ (resp.\ element $w$) in $\OO_L$ \textit{odd} if $\Norm(\aaa)$ (resp.\ $\Norm(w)$) is an odd integer.
\begin{remark}
For instance, in $\ZZI$, the principal ideal $(5)$ is odd but not primitive. Indeed, note that $\Norm(5) = 25$, but $\ZZI/(5)\cong \ZZI/(2+\sqrt{-1})\times \ZZI/(2-\sqrt{-1})\cong \ZZ/(5)\times\ZZ/(5) \ncong \ZZ/(25)$.
\end{remark}
If $\aaa$ is primitive and odd, then for every rational integer $n$ we have the equality of quadratic residue symbols
\begin{equation}\label{primitive2}
\left(\frac{n}{\Norm(\aaa)}\right) = \left(\frac{n}{\aaa}\right),
\end{equation}
where the symbol on the left is the usual Jacobi symbol while the symbol on the right is the quadratic residue symbol in $\OO_L$. By \eqref{primitive1} and \eqref{primitive2}, we deduce that 
\begin{equation}\label{primitive3}
\sum_{z\in \OO_L/\aaa}\left(\frac{z}{\aaa}\right) = \sum_{n\in \ZZ/(\Norm(\aaa))}\left(\frac{n}{\Norm(\aaa)}\right).
\end{equation}
Suppose $\aaa$ and $\bb$ are ideals in $\OO_L$. If either of $\aaa$ and $\bb$ is not primitive, then their product $\aaa\bb$ is not primitive. Even if $\aaa$ and $\bb$ are both primitive, we will now see that their product $\aaa\bb$ is \textit{not} necessarily primitive.
\begin{lemma}\label{primitive5}
Suppose $\aaa$ and $\bb$ are primitive. Let $\rrr = \gcd(\aaa, \obb)$ and $r = \Norm(\rrr)$. Then $\aaa\bb/(r)$ is primitive. In particular, $\aaa\bb$ is primitive if and only if $\gcd(\aaa, \obb) = (1)$.
\end{lemma}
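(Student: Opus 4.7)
The plan is to derive everything from the characterization of primitivity given by Lemma \ref{primitive4}: an odd ideal is primitive if and only if it is coprime to its conjugate. First I would note that $\rrr$ divides $\aaa$, so $\gcd(\rrr, \overline{\rrr})$ divides $\gcd(\aaa, \oaaa) = (1)$ by primitivity of $\aaa$; hence $\rrr$ is itself primitive, and consequently $\rrr\overline{\rrr} = (\Norm(\rrr)) = (r)$ as ideals of $\ZZd$.

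Next I would verify that $(r)$ actually divides $\aaa\bb$. Since $\rrr \mid \aaa$ and $\overline{\rrr} \mid \bb$ (the latter obtained by conjugating $\rrr \mid \obb$), and since $\rrr$ and $\overline{\rrr}$ are coprime, their product $(r) = \rrr\overline{\rrr}$ divides $\aaa\bb$. The quotient $\aaa\bb/(r)$ is then a well-defined ideal of $\ZZd$, and it is odd because it divides the odd ideal $\aaa\bb$.

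The heart of the proof is the equality $\gcd(\aaa\bb, \oaaa\obb) = (r)$, which I would verify prime by prime. For $\pp \mid \rrr$, primitivity of $\bb$ forces $v_\pp(\bb) = 0$ (otherwise both $\pp$ and $\opp$ would divide $\bb$, making it non-primitive) and primitivity of $\aaa$ forces $v_\pp(\oaaa) = 0$, so $v_\pp(\gcd(\aaa\bb, \oaaa\obb)) = \min(v_\pp(\aaa), v_\pp(\obb)) = v_\pp(\rrr)$; a symmetric argument handles primes dividing $\overline{\rrr}$. For $\pp$ coprime to $\rrr\overline{\rrr}$, if $\pp$ divides $\aaa\bb$ then one checks that $\pp$ must be coprime to $\oaaa\obb$: indeed, if $\pp \mid \aaa$ then primitivity of $\aaa$ excludes $\pp \mid \oaaa$ and $\pp \nmid \rrr$ excludes $\pp \mid \obb$, and the case $\pp \mid \bb$ is analogous using $\pp \nmid \overline{\rrr}$. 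Summing valuations gives $\gcd(\aaa\bb, \oaaa\obb) = \rrr\overline{\rrr} = (r)$.

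Dividing by $(r)$ then shows that $\aaa\bb/(r)$ is coprime to its conjugate $\oaaa\obb/(r)$, and combined with oddness this yields primitivity by Lemma \ref{primitive4}. The \textit{in particular} statement is then immediate: $\aaa\bb$ is primitive if and only if $(r) = (1)$, if and only if $\rrr = (1)$, if and only if $\gcd(\aaa, \obb) = (1)$. The only substantive work is the prime-by-prime valuation bookkeeping, but there is no serious obstacle because the primitivity hypotheses on $\aaa$ and $\bb$ fully decouple each prime ideal from its conjugate.
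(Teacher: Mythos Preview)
Your proof is correct. Both your argument and the paper's are prime-by-prime valuation checks, but they are packaged differently. The paper works directly with the definition of primitivity (no rational prime divisor): it fixes a rational prime $p$ with $(p)\mid\aaa\bb$, observes that $p$ must split as $\pp\opp$ with (after relabeling) $\pp\mid\aaa$, and then argues that if $p^k$ is the exact power of $p$ in $\aaa\bb$ then primitivity of $\aaa$ and $\bb$ forces $\pp^k\mid\aaa$, $\opp^k\mid\bb$, and hence $\pp^k\mid\rrr$, so $p^k\mid(r)$. You instead route through Lemma~\ref{primitive4} and compute the full identity $\gcd(\aaa\bb,\oaaa\obb)=(r)$ before dividing out. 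Your route makes the role of Lemma~\ref{primitive4} explicit and yields the slightly stronger intermediate statement $\gcd(\aaa\bb,\oaaa\obb)=(r)$; the paper's route is shorter because it only tracks the rational-prime content rather than the full conjugate-gcd.
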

\begin{proof}
Write $\aaa = \rrr\aaa_1$ and $\bb = \overline{\rrr}\bb_1$ with $\gcd(\aaa_1, \obb_1) = (1)$. The claim then is that $\aaa_1\bb_1$ is primitive. As $\aaa$ is primitive, we have $\gcd(\aaa_1, \oaaa_1) = (1)$ and hence $\gcd(\aaa_1, \overline{\aaa_1\bb_1}) = (1)$. Similarly, as $\bb$ is primitive, we deduce that $\gcd(\bb_1, \overline{\aaa_1\bb_1}) = (1)$, and thus the claim is proved.
\end{proof}
Given two primitive ideals $\aaa$ and $\bb$ and $r$ as above, we now show that we can obtain a primitive ideal by dividing $\aaa\bb$ by an ideal of norm $r$ (as opposed to $r^2$). As before, we set $\rrr = \gcd(\aaa, \obb)$ and we write $\rrr = \rrr_a\rrr_b$, where
$$
\rrr_a = \prod_{\substack{\pp^k\|\rrr \\ \val_{\pp}(\aaa)<\val_{\pp}(\obb)}}\pp^k, \ \ \ \ \ \ \text{ and }\ \ \ \ \ \ \rrr_b = \prod_{\substack{\pp^k\|\rrr \\ \val_{\pp}(\aaa)\geq \val_{\pp}(\obb)}}\pp^k.
$$
We set $\ccc = \rrr_a\overline{\rrr_b}$. Then clearly $\Norm(\ccc) = \Norm(\rrr) = r$. Moreover, by construction 
$$
\gcd\left(\frac{\aaa}{\rrr_{a}}, \frac{\obb}{\rrr_{b}}\right) = (1),
$$
so by Lemma \ref{primitive5}, we conclude $\aaa\bb/\ccc$ is primitive. As $\gcd(\rrr_a, \rrr_b) = (1)$, we see that $\ccc$ is also primitive. Finally, we claim that $\gcd(\ccc, \aaa\bb/\ccc) = (1)$. Indeed, suppose that $\pp$ divides $\ccc$. First, if $\pp$ divides $\rrr_a$, then $\pp$ doesn't divide $\aaa/\rrr_a$ by construction and $\pp$ doesn't divide $\bb/\overline{\rrr_b}$ because $\bb$ is primitive. Similarly, if $\pp$ divides $\overline{\rrr_b}$, then $\pp$ doesn't divide $\bb/\overline{\rrr_b}$ by construction and $\pp$ doesn't divide $\aaa/\rrr_a$ because $\aaa$ is primitive. This proves the claim. Now the Chinese Remainder Theorem and \eqref{primitive1} imply that
\begin{equation}\label{primitive6}
\OO_L/\aaa\bb \cong \OO_L/(\aaa\bb/\ccc)\times\OO_L/\ccc \cong \ZZ/(Y/r)\times \ZZ/(r),
\end{equation}
where $Y = \Norm(\aaa\bb)$.

\subsection{Cancellation in quadratic character sums}
The rough idea behind proving that the symbol $\ve(p, q)$ defined in \eqref{symbol} oscillates as $p$ and $q$ vary in a box where neither $p$ nor $q$ is too small is to give meaning to $\ve(m, n)$ for all integers $m$ and $n$, then to prove that the bilinear sum 
$$
\sumsum_{m, n}a_mb_n\ve(m, n)
$$
oscillates for any bounded sequences $\{a_m\}_m$ and $\{b_n\}_n$, and finally to apply this result to sequences $\{a_m\}_m$ and $\{a_n\}_n$ supported on the primes. The following definition generalizes the symbol $\ve(p, q)$ in a way that will allow us to apply this method. 

Let $w, z\in \OO_L$ and suppose that $w$ is odd. We define the quadratic multiplicative character $\chiw : \OO_L\rightarrow \{-1, 0, 1\}$ by setting
$$
\chiw(z) := \left(\frac{z}{(w)}\right),
$$
and we define the multiplier factor $\mul(w)$ for odd elements $w\in\OO_L$ by setting $\mul(w):=\chiw(\ow)$. We note that $\chiw(z) \neq 0$ if and only if $\gcd((z), (w)) = (1)$, and so in particular Lemma~\ref{primitive5} implies that $\mul(w)$ is supported on primitive elements $w$. \begin{remark}
Suppose $w\in\OO_L$ generates an odd prime of degree $1$, so that $\Norm(w)=p$ for some rational prime $p$. Then
\begin{equation}\label{linkjacgamma}
\chiw(z)\chiw(\oz)  = \left(\frac{\Norm(z)}{(w)}\right)  = \left(\frac{\Norm(z)}{p}\right),
\end{equation}
where the symbol on the far right is the usual Jacobi symbol.
\end{remark}
\begin{remark}
In the particular case when $w$ and $z$ are primes in $\ZZd$ (with $d_0\in\{-1, -2, 2\}$) lying above rational primes $p$ and $q$, respectively, satisfying \eqref{pqcong}, \eqref{pqjac}, \eqref{wzconditions}, \eqref{split2}, and, if $d = 8$, also \eqref{xpos}, then $\ve(p, q)$ as defined in \eqref{symbol} can be written as
\begin{equation}\label{linkvegamma}
\ve(p, q) = \mul(w)\chiw(\oz).
\end{equation}
\end{remark}
\begin{remark}
The Dirichlet symbol defined in \cite[Equation (19.11), p.\ 1019]{FI1} is simply equal to $\mul(w)\chiw(\oz)$ in the special case that $L = \QQ(\sqrt{-1})$.
\end{remark}
The following proposition provides all of the cancellation that we need for Proposition \ref{PropB}.
\begin{prop}\label{keycancellation}
Let $w_1, w_2\in\OO_L$ be odd and primitive. Let $\rrr = \gcd((w_1), (\ow_2))$, $r = \Norm(r)$, $Y = \Norm(w_1w_2)$. Then
$$
\left|\sum_{\substack{z\in \OO_L/(Y)}}\chi_{w_1}(z)\chi_{w_2}(z) \right| =
\begin{cases}
Y\varphi(r)\varphi(Y/r) & \text{if }Y\text{ and }r\text{ are squares} \\
0 & \text{otherwise.}
\end{cases}
$$
\end{prop}
\begin{proof}
We have
$$
\sum_{\substack{z\in \OO_L/(Y)}}\chi_{w_1}(z)\chi_{w_2}(z) = \sum_{\substack{z\in \OO_L/(Y)}} \left(\frac{z}{(w_1w_2)}\right) = Y\sum_{\substack{z\in \OO_L/(w_1w_2)}}\left(\frac{z}{(w_1w_2)}\right).
$$
By \eqref{primitive6}, we have $\OO_L/(w_1w_2) \cong \OO_L/\ccc_1\times \OO_L/\ccc_2$, where $\ccc_1$ and $\ccc_2$ are coprime primitive ideals of norm $Y/r$ and $r$, respectively, satisfying $(w_1w_2) = \ccc_1\ccc_2$. Hence
$$
\sum_{\substack{z\in \OO_L/(w_1w_2)}}\left(\frac{z}{(w_1w_2)}\right) = \sumsum_{\substack{z_{1}\bmod \ccc_1 \\ z_{2}\bmod \ccc_2}}\left(\frac{z'}{\ccc_1\ccc_2}\right),
$$
where $z' = z'(z_{1}, z_{2})$ is the unique congruence class modulo $\ccc_1\ccc_2$ such that $z'\equiv z_{1}\bmod \ccc_1$ and $z' \equiv z_{2}\bmod \ccc_2$. With these choices, we have
$$
\left(\frac{z'}{\ccc_1\ccc_2}\right) = \left(\frac{z'}{\ccc_1}\right)\left(\frac{z'}{\ccc_2}\right) = \left(\frac{z_{1}}{\ccc_1}\right)\left(\frac{z_{2}}{\ccc_2}\right). 
$$ 
Then, by \eqref{primitive3}, we have 
$$
\sum_{z_{1}\bmod \ccc_1}\left(\frac{z_{1}}{\ccc_1}\right) \sum_{z_{2}\bmod \ccc_2}\left(\frac{z_{2}}{\ccc_2}\right) = \sum_{c_1\in \ZZ/(Y/r)}\left(\frac{c_1}{Y/r}\right)\sum_{c_2\in \ZZ/(r)}\left(\frac{c_2}{r}\right),
$$
where the symbols on the right-hand side of the equality are the usual Jacobi symbols. For any positive integer $n$, we have
$$
\sum_{a\in \ZZ/(n)}\left(\frac{a}{n}\right) = 
\begin{cases}
\varphi(n)&\text{ if }n\text{ is a square}, \\
0&\text{ otherwise}.
\end{cases}
$$
Combining all of the equations above, we conclude the proof of the proposition.
\end{proof}

\subsection{A family of Hecke characters for $\ZZd$}\label{HeckeFamily}\label{defpsi}
We now return to the case that $d\in\{-4, -8, 8\}$, and we set $d_0  = d/4$, as before. The function $\chiw$ is a character on $\left(\ZZd/(w)\right)^{\times}$. We now show that this character can be completed into a Hecke character $\psi_w$ for $\QQd$ in the case that $w$ is a prime of degree $1$ in $\ZZd$ satisfying $\Norm(w) = p \equiv 1\bmod 8$. We must define a homomorphism $\psi_w$ on the group $\III(w)$ of fractional ideals of $\ZZd$ coprime to $(w)$, i.e., 
$$
\psi_w: \III(w) \rightarrow S^{1} = \{s\in \CC:|s| = 1\},
$$
such that there exists a continuous function
$$
\chi_{w, \infty}: F^{\times}\rightarrow S^{1}
$$
satisfying $\chiw(u)\chi_{w, \infty}(u) = \psi_w((u)) = 1$ for all units $u\in\ZZd^{\times}$; here
$$
F^{\times} =
\begin{cases}
\CC^{\times} & \text{ if }d = -4\text{ or } -8, \\
\RR^{\times}\times\RR^{\times} & \text{ if }d_0 = 8. 
\end{cases}
$$
\subsubsection{The cases $d = -4$ and $d = -8$}
For the case $d = -8$, note that $\chiw$ is trivial on $\ZZJ^{\times} = \{\pm 1\}$ because $p\equiv 1\bmod 4$ (i.e., $\left(\frac{-1}{p}\right) = 1$). For the case $d = -4$, suppose $w = a+b\sqrt{-1}$ with $a, b\in\ZZ$, $a$ odd, and $b = 2^kb'$ with $b'$ odd. As $\gcd(b, p) = 1$, we can write $\sqrt{-1}\equiv -a/b \bmod w$. As $p = a^2+b^2 \equiv 1\bmod 8$, we have
$$
\begin{array}{rcl}
\chiw(\sqrt{-1}) & = & \displaystyle{\left(\frac{\sqrt{-1}}{(w)}\right) = \left(\frac{-a/b}{(w)}\right) = \left(\frac{-ab}{(w)}\right)} \\
 & = & \displaystyle{\left(\frac{-ab}{p}\right) = \left(\frac{|a|}{p}\right)\left(\frac{|b'|}{p}\right)} \\
 & = & \displaystyle{\left(\frac{p}{|a|}\right) \left(\frac{p}{|b'|}\right) = 1\cdot 1 = 1},
\end{array}
$$
where the symbols from the second line onwards are usual Jacobi symbols. Hence $\chiw$ is trivial on $\ZZI^{\times}$. Therefore, in case $d \in\{-4, -8\}$, we can extend $\chiw$ to a character on ideals in $\ZZd$ by setting $\chiw(\aaa) := \chiw(z)$, where $z$ is any generator of $\aaa$. Now it suffices to take $\chi_{w, \infty}$ to be identically $1$ on all of $\CC^{\times}$. Then setting $\psi_w(\aaa) := \chiw(\aaa)$ defines is a character on $\III(w)$. Moreover, by \eqref{linkvegamma}, if $p$ and $q$ are primes satisfying \eqref{pqcong} and \eqref{pqjac}, and $w$ and $z$ are primes in $\ZZd$ satisfying \eqref{wzconditions} and \eqref{split2}, then we have
$\ve(p, q) = \mul(w)\psi_{w}((\oz))$.

\subsubsection{The case $d = 8$}
Suppose $w = a+b\sqrt{2}$ with $a, b\in\ZZ$. Then, as $p \equiv 1\bmod 8$, $b$ must be even. The unit group $\ZZT^{\times}$ is generated by $-1$ and $\ve = 1+\sqrt{2}$. We have $\chiw(-1) = \left(\frac{-1}{p}\right) = 1$. If we write $b = 2^kb'$ with $b'$ odd, we have
\begin{equation}\label{chiwve}
\begin{array}{rcl}
\chiw(\ve) & = & \displaystyle{\left(\frac{1+\sqrt{2}}{(w)}\right) = \left(\frac{1-a/b}{(w)}\right)} \\
 & = & \displaystyle{\left(\frac{1-a/b}{p}\right) = \left(\frac{b^2-ab}{p}\right) = \left(\frac{b}{p}\right)\left(\frac{a-b}{p}\right)} \\
 & = & \displaystyle{\left(\frac{p}{|b'|}\right) \left(\frac{p}{|a-b|}\right) = 1\cdot \left(\frac{p-(a^2-b^2)}{|a-b|}\right) = \left(\frac{-1}{|a-b|}\right)},
\end{array}
\end{equation}
where again the symbols from the second line onwards are usual Jacobi symbols. Every other generator for the ideal $(w)$ of norm $p$ is of the form $\pm\ve^{2k}w$, where $k$ is an integer. As $\ve^2(a+b\sqrt{2}) = (3a+4b)+(2a+3b)\sqrt{2}$ and $(3a+4b) - (2a+3b) = a + b \equiv a - b \bmod 4$, the last line of \eqref{chiwve} implies that $\chi_w(\ve) = \chi_{\ve^2w}(\ve)$. Moreover, again by the last line of \eqref{chiwve}, we have $\chi_{-w}(\ve) = \left(\frac{-1}{|-a+b|}\right) = \left(\frac{-1}{|a-b|}\right) = \chi_w(\ve)$. Thus we cannot always choose a generator $w$ of a prime ideal lying above $p$ satisfying both $\Norm(w) = p$ and $\chi_w(\ve) = 1$. In fact, we have 
$$
\chiw(\ve) = 
\begin{cases}
1&\text{ if }|a-b|\equiv 1\bmod 4, \\
-1&\text{ otherwise}.
\end{cases}
$$
We will define a different Hecke character $\psi_w$ modulo $(w)\infty_1\infty_2$ in each of the cases above; here $\infty_1$ and $\infty_2$ are the two embeddings $\QQ(\sqrt{2})\hookrightarrow\RR$. If $\chi_w(\ve) = 1$, then $\chi_w$ is already a character on fractional ideals in $\ZZT$ and we simply define $\psi_w: \III(w)\rightarrow S^1$ by setting $\psi_w(\aaa) := \chiw(z)$, where $z$ is any generator of $\aaa$. In this case, we again take $\chi_{w, \infty}$ to be identically $1$ on all of $\RR^{\times}\times \RR^{\times}$. If $\chiw(\ve) = -1$, we take $\chi_{w, \infty}(z) = \sign(\Norm(z))$, and define $\psi_w(\aaa) = \chiw(z)\chi_{w, \infty}(z)$, where $z$ is any generator of $\aaa$. The homomorphism $\psi_w$ is well-defined because $\chiw(\ve)\chi_{w, \infty}(\ve) = -1\cdot -1 = 1$ and $\chiw(-1)\chi_{w, \infty}(-1) = 1\cdot 1 = 1$.

We note that in both cases, if $z\equiv 1\bmod^{\times}(w)\infty_1\infty_2$, so that $z, \oz>0$, then $\psi_w((z)) = 1 = \sign(z)$. Furthermore, similarly as in the cases $d = -4$ and $d = -8$, if $p$ and $q$ are primes satisfying \eqref{pqcong} and \eqref{pqjac} and $w$ and $z$ are primes in $\ZZT$ satisfying \eqref{wzconditions}, \eqref{split2}, \eqref{xpos}, then we have $\ve(p, q) = \mul(w)\psi_{w}((\oz))$.

Finally, we remark that if $w$ and $z$ in $\ZZd$ (\textit{any} $d_0\in\{-1, -2, 2\}$) satisfying \eqref{wzconditions}, \eqref{split2}, and also \eqref{xpos} if $d_0 = 2$, then so do $w$ and $\oz$. Hence
\begin{equation}\label{vepsi}
\ve(p, q) = \mul(w)\psi_{w}(\qq),
\end{equation}
where $\qq$ is any prime ideal in $\ZZd$ lying above $q$.

\section{Proof of Proposition \ref{PropA}}\label{PropAproof}
In this section, we exploit the arithmetic of $\QQd$ ($d\in\{-4, -8, 8\}$) to prove that $\ve(p, q)$ oscillates when $q$ varies over a range much bigger than the size of $p$. The main tool is the theory of Hecke $L$-functions. Let us first recall the sum from Proposition \ref{PropA}. We let
$$
A_p(X; f) = \sum_{q\leq X}f(p, q; \br, \bs, \be),
$$
where $\br = (r_1, r_2)\in \{1, 9\}\times \{1, 9\}$, $r_1r_2\equiv 1\bmod 16$ if $d = 8$, $\bs = (s_1, s_2)\in\{\pm 1\}\times \{\pm 1\}$, $s_1s_2 = 1$, $\be\in\FF_2^2$, $\be\neq (0, 0)$, and 
$$
f(p, q; \br, \bs, \be) = 
\begin{cases}
\chi_p(q)^{e_1}\ve(p, q)^{e_2} & \text{if }(p, q)\equiv \br\bmod 16 \text{ and }(\chi_2(p), \chi_2(q)) = \bs\\
0 & \text{otherwise.} 
\end{cases}
$$
Hence $A_p(X; f)$ vanishes unless $p\equiv r_1\bmod 16$ and $\chi_2(p) = s_1$. So let $p$ be a prime number satisfying $p\equiv r_1\bmod 16$ and $\chi_2(p) = s_1$, let $w \in \ZZd$ be a prime satisfying \eqref{wzconditions}, and let $\psi_w$ be the Hecke character on $\QQd$ defined in Section~\ref{defpsi}. By \eqref{vepsi}, we have
$$
f(p, q; \br, \bs, \be) =
\begin{cases}
\left(\frac{p}{\qq}\right)^{e_1}(\mul(w)\psi_w(\qq))^{e_2} & \text{if }q\equiv r_2\bmod 16\text{ and }\chi_2(q) = s_2 \\
0 & \text{otherwise,}
\end{cases}
$$
where $\qq$ is a prime ideal in $\ZZd$ dividing~$q$. To use the theory of $L$-functions for the number field $\QQd$, we now define a function on \textit{all} ideals $\qq$ in $\ZZd$. Let
$$
f_1(\qq; w, \be) := \left(\frac{p}{\qq}\right)^{e_1}\psi_w(\qq)^{e_2}.
$$
We can detect the congruence condition $q\equiv r_2\bmod 16$ via Dirichlet characters modulo $16$ and the condition $\chi_2(q) = s_2$ via the formula
$$
\frac{1}{2}\left(1+s_2\chi_2(q)\right) = 
\begin{cases}
1&\text{if }\chi_2(q) = s_2 \\
0&\text{otherwise.}
\end{cases}
$$
Then, with $c_p = 32 \cdot \mul(w)^{e_2}$, we have 
\begin{align*}
c_p\cdot A_p(X; f) & = \sumsum_{\substack{\chi_{16}\bmod 16\\ e_3\in\FF_2}}\sum_{\substack{\qq\text{ split} \\ \Norm(\qq)\leq X}} \chi_{16}(r_2\Norm(\qq))(s_2\chipt(\qq))^{e_3}f_1(\qq; w, \be) \\
 & = \sumsum_{\substack{\chi_{16}\bmod 16\\ e_3\in\FF_2}}\sum_{\substack{\qq \\ \Norm(\qq)\leq X}} \chi_{16}(r_2\Norm(\qq))(s_2\chipt(\qq))^{e_3}f_1(\qq; w, \be) \\
 & \ \ \ \ -\sumsum_{\substack{\chi_{16}\bmod 16\\ e_3\in\FF_2}}\sum_{\substack{\qq\text{ inert} \\ \Norm(\qq)\leq X}} \chi_{16}(r_2\Norm(\qq))(s_2\chipt(\qq))^{e_3}f_1(\qq; w, \be),
\end{align*}
where the outer sums are over Dirichlet characters $\chi_{16}$ modulo $16$ and elements $e_3\in\FF_2$. But if a prime ideal $\qq = (q)$ in $\ZZd$ is inert, then $\Norm(\qq) = q^2$, so
$$
\sum_{\substack{\qq\text{ inert} \\ \Norm(\qq)\leq X}} 1 \ll X^{\frac{1}{2}}.
$$
Hence, to prove Proposition \ref{PropA}, it remains to show, for each Dirichlet character $\chi_{16}$ and element $e_3\in\FF_2$, that there exists a constant $c>0$ such that 
$$
\sum_{\Norm(\qq)\leq X} \chi_{16}(\Norm(\qq))\chipt(\qq)^{e_3}f_1(\qq; w, \be) \ll X\exp\left(c\sqrt{\log X}\right)
$$
for all $p = \Norm(w)\leq \left(\log X\right)^{100}$. We now apply the theory of Hecke $L$-functions to obtain this bound. Define the Hecke character $\psi$ for $\ZZd$ by setting
$$
\psi(\qq) = \chi_{16}(\Norm(\qq))\chipt(\qq)^{e_3}f_1(\qq; w, \be).
$$
We claim that the function $\qq\mapsto \psi(\qq)$ is a non-trivial Hecke character for $\ZZd$ of conductor $\ff$ satisfying $\Norm(\ff)\ll p^2$, where the implied constant is absolute. First, note that $\qq\mapsto \chi_{16}(\Norm(\qq))\chipt(\qq)^{e_3}$ is a Hecke character of conductor dividing a power of $2$. If $e_1 = 1$ and $e_2 = 0$, then the claim follows because $\qq\mapsto \left(\frac{p}{\qq}\right)$ is a non-trivial Hecke character of conductor $(p)$. If $e_1 = 0$ and $e_2 = 1$, then the claim follows because $\qq\mapsto \psi_w(\qq)$ is a non-trivial Hecke character of conductor $(w)$, as shown in Section \ref{HeckeFamily}. Finally, if $e_1 = e_2 = 1$, then by \eqref{linkjacgamma}, we have
$$
\left(\frac{p}{\qq}\right)\psi_w(\qq) = \psi_w(\overline{\qq}) = \psi_{\ow}(\qq),
$$
so that $\qq\mapsto \left(\frac{p}{\qq}\right)\psi_w(\qq)$ is a non-trivial Hecke character of conductor $(\ow)$.

Now that we have established the claim, we use a version of the Siegel-Walfisz Theorem for Hecke $L$-functions. As usual, we define the Hecke $L$-function
$$
L(s, \psi) = \sum_{\aaa}\psi(\aaa)\Norm(\aaa)^{-s} \ \ \ (\Re(s)>1),
$$
where the sum is over all non-zero ideals $\aaa\subset \ZZd$. By \cite[Theorem 3.3.1, p. 93]{Miy}, $L(s, \psi)$ has a meromorphic continuation to $\CC$ and satisfies a functional equation as well as other standard properties of $L$-functions. As $\psi$ is not the trivial character, the order of the pole at $s = 1$ of $L(s, \psi)$ is $0$. Hence \cite[Main Theorem, p. 418]{Gold} (with, say, $\varepsilon = \frac{1}{800}$) implies that there is an absolute constant $c>0$ such that for all $p\leq \left(\log X\right)^{100}$, we have
$$
\sum_{\Norm(\qq)} \psi(\qq) \ll X\exp\left(-c(\log X)^{\frac{1}{4}}\right).
$$
This completes the proof of Proposition \ref{PropA}.
\begin{remark}
The range of $p$ for which the above bound holds could be extended to $\exp\left(c'\sqrt{\log X}\right)$ for some small $c'>0$ instead of a power of $\log X$ if we were certain that $L(s, \psi)$ has no Siegel zeros. Although this is conjectured to be true in any case, we can only show it in the cases when $d \in\{-4, -8\}$ and $e_2 = 1$. In all cases $d \in\{-4, -8, 8\}$, when $e_2 = 1$, the theta series 
$$
\Theta(z, \psi) = \sum_{\aaa}\psi(\aaa)\exp(2\pi i \Norm(\aaa))
$$
is a holomorphic modular form of weight $1$ and level $4p$ (see \cite[Theorem 4.8.2, p. 183]{Miy}). Now a theorem of Hoffstein and Ramakrishnan \cite[Theorem C, p.299]{HofRam} implies that the associated $L$-function $L(s, \psi)$ has no Siegel zero whenever $\Theta(z, \psi)$ is a cusp form. If $d = -4$ or $d= -8$, then this is indeed the case. Otherwise, if $d = 8$, unfortunately $\Theta(z, \psi)$ is \textit{not} a cusp form.
\end{remark}

\section{Proof of Proposition \ref{PropB}}
In this section we finish the proof of Proposition \ref{PropB} and hence also of Theorem~\ref{mainthm}. We will use power-saving upper bounds for very general bilinear sums that were obtained in \cite{FI1} for $d = -4$ and \cite{Milovic2} for $d = 8$. We first prove an estimate that holds in general quadratic fields. 

\subsection{Double-oscillation estimates for $\chiw(z)$}\label{Doubleoscillation}
Let $L$ be the quadratic number field of discriminant $D$, and let $\OO_L$ be its ring of integers, as before. For a subset $S\subset \RR^2$ and an element $w = a+b\sqrt{D}\in L$ with $a, b\in\QQ$, we will write $w\in S$ if and only if $(a, b)\in S$. If $D>0$, we define a region $\RRR_k \subset \RR^2$ for each integer $k\geq 1$ by setting
$$
\RRR_k := \left\{(x, y)\in\RR^2:\ x>0, |y|\leq D^{-\frac{1}{2}}\frac{\ve_D^k-1}{\ve_D^k+1}x\right\}.
$$
where $\ve_D$ is the fundamental unit of norm $+1$, i.e., the smallest element of $\OO_L\hookrightarrow \RR$ satisfying $\ve_D>1$ and $\Norm(\ve_D) = 1$. Then $\RRR_k$ looks like a cone emanating from the origin. Moreover, the set of $\alpha\in \OO_L\cap\RRR_k$ is exactly the set of totally positive $\alpha\in\OO_L$ satisfying $\ve_D^{-k}\leq \overline{\alpha}/\alpha \leq \ve_D^k$. Hence every principal ideal of $\OO_L$ that can be generated by a totally positive element has exactly one generator in $\RRR_1$ (see for instance \cite[Chapter 6]{Marcus}). If $D<0$, we simply set $\RRR_k = \RR^2$. Finally, given positive real numbers $X$ and $\Delta$, we set
$$
\RRR_k(X; \Delta) := \left\{(x, y)\in \RRR_k:\ X\leq \Norm(x+y\sqrt{D}) < X(1+\Delta)\right\}.
$$
We note that there are constants $c_1 = c_1(D, k) > 0$ and $c_2 = c_2(D, k) > 0$ such that the $2$-dimensional volume of $\RRR_k(X; \Delta)$ is bounded by $c_2 \Delta X$ and such that the sum of the $1$-dimensional volumes of the projections of $\RRR_k(X; \Delta)$ on the coordinate axes is bounded by $c_1 X^{\frac{1}{2}}$.

We now define the general bilinear sum of interest. Given two sequences of complex numbers $\alpha = \{\alpha_{w}\}$ and $\beta = \{\beta_{z}\}$ indexed by elements of $\OO_L$, and real numbers $W, Z > 0$, we set
$$
B(W, Z; \alpha, \beta):= \suma_{\substack{w \in\RRR_1(W; \Delta)}}\suma_{\substack{z \in \RRR_1(Z; \Delta)}}\alpha_{w}\beta_{z}\chiw(z),
$$
where $\ast$ restricts the sums to primitive elements. We will prove
\begin{prop}\label{doubleosc}
There exists a real number $C>0$ such that: for all real numbers $W, Z > 1$, for all real numbers $\Delta \in (0, 1)$ satisfying $\Delta W > W^{\frac{1}{2}}$, $\Delta Z \geq Z^{\frac{1}{2}}$, and for all sequences of complex numbers $\alpha = \{\alpha_{w}\}$ and $\beta = \{\beta_{z}\}$ satisfying $|\alpha_w|, |\beta_z|\leq 1$ and supported on $w$ and $z$ such that the principal ideals $(w)$ and $(z)$ each have at most $f$ prime ideal factors in $\OO_L$, we have 
$$
\left|B(W, Z; \alpha, \beta)\right| \leq C\cdot 2^{40f} \Delta^{-\frac{11}{12}}\left(W^{-\frac{1}{12}}+Z^{-\frac{1}{12}}\right)\Delta^2 WZ\log(W+Z).
$$
\end{prop}
\begin{proof}
Fix an integer $k\geq 1$. The implied constants in the $\ll$ symbols that follow may depend on $k$ in some cases, but we suppress this dependence since we will ultimately take $k = 3$. By H\"{o}lder's inequality (with $\frac{2k-1}{2k}+\frac{1}{2k} = 1$), we have
\begin{equation}\label{bbound1}
\left|B(W, Z; \alpha, \beta)\right|^{4k} \leq \left(\suma_{w}|\alpha_w|^{\frac{2k}{2k-1}}\right)^{4k-2}\cdot \left(\suma_{w}\left|\suma_{z}\beta_z\chiw(z)\right|^{2k}\right)^2,
\end{equation}
where the sums over $w$ and $z$ are implicitly restricted to $w\in\RRR_1(W, \Delta)$ and $z\in\RRR_1(Z, \Delta)$, each having at most $f$ prime ideal factors. By the Lipschitz principle (see~\cite{Dav}), since $|\alpha_w| \leq 1$, and since $1\leq W^{\frac{1}{2}}\leq \Delta W$, the first factor on the right-hand side of \eqref{bbound1} is
\begin{equation}\label{bbound2}
\ll (\Delta W)^{4k-2}.
\end{equation}
We expand the inner sum in the second factor on the right-hand side of \eqref{bbound1} to get
$$
\left|\suma_{z}\beta_z\chiw(z)\right|^{2k} = \sum_{z}\beta_z'\chiw(z),
$$
where
$$
\beta_z' = \sum_{\substack{z = z_1\cdots z_{2k} \\ z_1,\ldots, z_{2k}\in \RRR_1(Z, \Delta) \\ z_1,\ldots, z_{2k}\text{ primitive}}}\beta_{z_1}\overline{\beta_{z_2}}\cdots \beta_{z_{2k-1}}\overline{\beta_{z_{2k}}}.
$$
We now determine the support of $\beta_z'$. If $z = z_1\cdots z_{2k}$ with $z_i\in \RRR_1(Z, \Delta)$ for $1\leq i\leq 2k$, then $Z^{2k}\leq\Norm(z)\leq Z^{2k}(1+\Delta)^{2k}$ and $z\in\RRR_{2k}$. Hence $\beta_z' = 0$ unless $z\in\RRR_{2k}(Z^{2k}, \Delta')$, where $\Delta' = (1+\Delta)^{2k}-1$, and unless the principal ideal $(z)$ has at most $2kf$ prime ideal factors. We now apply the Cauchy-Schwarz inequality to the second factor on the right-hand side of \eqref{bbound1} to get
\begin{equation}\label{bbound3}
\left(\sum_{z}\beta_z'\suma_{w}\chiw(z)\right)^2 \ll \left(\sum_{z}|\beta_z'|^2\right) \cdot \sum_{z}\left|\suma_{w}\chiw(z)\right|^2,
\end{equation}
where the summations over $z$ are implicitly restricted to $z\in\RRR_{2k}(Z^{2k}, \Delta')$. Since $\beta_z'$ is supported on $z$ that are a product of $2k$ numbers $z_i\in\RRR_1(Z, \Delta)$, each of which has at most $f$ prime ideal factors, and since each principal ideal has at most $g$ generators in $\RRR_1(Z, \Delta)$, where $g = 1$ if $D>0$, $g = 4$ if $D = -4$, $g = 6$ if $D = -3$, and $g = 2$ otherwise, we deduce by prime ideal factorization that
$$
|\beta_z'|\leq (2k)!\cdot{2kfg \choose f}^{2k} \leq (2k)!\cdot 2^{24k^2f}.
$$  
Hence the first factor on the right-hand side of \eqref{bbound3} is
\begin{equation}\label{bbound4}
\ll 2^{48k^2f}\Delta'Z^{2k}\ll 2^{48k^2f}\Delta Z^{2k},
\end{equation}
since $\Delta' = (1+\Delta)^{2k}-1\leq 2^{2k}\Delta$. We expand the square in the second factor on the right-hand side of \eqref{bbound3} and rearrange the sums to get
\begin{equation}\label{bbound5}
\suma_{w_1}\suma_{w_2}\sum_{z\in\RRR_{2k}(Z^{2k}, \Delta')}\chi_{w_1}(z)\chi_{w_2}(z).
\end{equation}
For each pair of primitive $w_1$ and $w_2$, set $Y = \Norm(w_1w_2)$ and $r = \Norm(\gcd((w_1), (\ow_2)))$. Using the Lipschitz principle of Davenport \cite{Dav} and Proposition \ref{keycancellation}, we estimate the inner sum by
$$
\sum_{z\in\RRR_{2k}(Z^{2k}, \Delta')}\chi_{w_1}(z)\chi_{w_2}(z) \ll
\begin{cases}
\Delta Z^{2k} + YZ^{k} + Y^2 & \text{if }Y\text{ and }r\text{ are squares} \\
YZ^{k} + Y^2 & \text{otherwise.}
\end{cases}
$$
Hence \eqref{bbound5} is
\begin{equation}\label{bbound6}
\ll \sumsum_{\substack{W < m_1, m_2 \leq (1+\Delta)W \\ m_1m_2\text{ square}}}4^f\left(\Delta Z^{2k} + W^2Z^k + W^4 \right)+(\Delta W)^2\left(W^2Z^k+W^4\right).
\end{equation}
The factor $4^f$ appears because the summations over $w_i$ are implicitly restricted to $w_i$ with at most $f$ prime ideal factors; the number of such $w\in\RRR_1(W)$ satisfying $\Norm(w) = m$ for a given $m$ is at most $2^f$. Combining \eqref{bbound2}, \eqref{bbound4}, and \eqref{bbound6}, we deduce that $|B(W, Z; \alpha, \beta)|^{4k}$ is less than some absolute constant $C>0$ times
$$
2^{50k^2f}\Delta^{4k+1}\left(W^{4k-1}Z^{4k}\log W+ W^{4k+2}Z^{3k} + W^{4k+4}\right),
$$
which implies that $B(W, Z; \alpha, \beta) \ll$
$$
2^{13kf}\Delta^{1+\frac{1}{4k}}\left(W^{-\frac{1}{4k}} + W^{\frac{1}{2k}}Z^{-\frac{1}{4}} + W^{\frac{1}{k}Z^{-1}}\right)WZ \log W.
$$
Note that $W^{-\frac{1}{4k}} > W^{\frac{1}{2k}}Z^{-\frac{1}{4}}$ whenever $Z>W^{\frac{k}{3}}$ and that $W^{-\frac{1}{4k}} > W^{\frac{1}{k}}Z^{-1}$ whenever $Z>W^{\frac{5}{4k}}$. Choosing $k = 3$, we get that 
\begin{equation}\label{bbound7}
B(W, Z; \alpha, \beta) \ll 2^{40f}\Delta^{\frac{13}{12}}W^{\frac{11}{12}}Z\log W 
\end{equation}
whenever $Z>W$. We now exploit the symmetry of the quadratic residue symbol. Indeed, by the law of quadratic reciprocity, we have
$$
\chiw(z) = \delta(w, z)\chi_{z}(w),
$$
where $\delta(w, z)$ depends only on the congruence classes of $w$ and $z$ modulo $8$. Hence
$$
B(W, Z; \alpha, \beta) = \sum_{\omega\bmod 8}\sum_{\zeta\bmod 8} B(W, Z; \alpha(\omega), \beta(\zeta)),
$$
where $\alpha(\omega)_w = \alpha_w\cdot \ONE(w\equiv \omega\bmod 8)$ and $\beta(\zeta)_z = \beta_z\cdot \ONE(z\equiv \zeta\bmod 8)$. But now $B(W, Z; \alpha(\omega), \beta(\zeta)) = \delta(\omega, \zeta) \cdot B(Z, W; \beta(\zeta), \alpha(\omega))$, so
\begin{equation}\label{bbound8}
\begin{array}{rcl}
|B(W, Z; \alpha, \beta)| & \leq & \displaystyle{64^2\cdot \max_{\omega, \zeta\bmod 8} |B(W, Z; \alpha(\omega), \beta(\zeta))|} \\
& = & \displaystyle{64^2\cdot \max_{\omega, \zeta\bmod 8} |B(Z, W; \beta(\zeta), \alpha(\omega))|} \\
& \ll & \displaystyle{2^{40f}\Delta^{\frac{13}{12}}Z^{\frac{11}{12}}W\log Z} 
\end{array}
\end{equation}
whenever $W>Z$. Combining \eqref{bbound7} and \eqref{bbound8}, we get the desired result.
\end{proof}

\subsection{From Proposition~\ref{doubleosc} Proposition~\ref{PropB}}
We will now prove Proposition~\ref{PropB} from Proposition~\ref{doubleosc} by making appropriate choices for the sequences $\{\alpha_{w}\}$ and $\{\beta_{z}\}$. First recall the sum from Proposition \ref{PropB}. We defined
$$
\BB(M, N; f, \Delta) = \sumsum_{\substack{M<p\leq M+\Delta M \\ N<q\leq N+\Delta N}}f(p, q; \br, \bs, \be)
$$
where $\br = (r_1, r_2)\in \{1, 9\}\times \{1, 9\}$, $r_1r_2\equiv 1\bmod 16$ if $d = 8$, $\bs = (s_1, s_2)\in\{\pm 1\}\times \{\pm 1\}$, $s_1s_2 = 1$, $\be\in\FF_2^2$, $\be\neq (0, 0)$, and 
$$
f(p, q; \br, \bs, \be) = 
\begin{cases}
\chi_p(q)^{e_1}\ve(p, q)^{e_2} & \text{if }(p, q)\equiv \br\bmod 16 \text{ and }(\chi_2(p), \chi_2(q)) = \bs\\
0 & \text{otherwise.} 
\end{cases}
$$
For $w, z\in\ZZd$, we set
\begin{align*}
\alpha_{f, w} = \ONE(w\text{ is a prime in }&\ZZd) \cdot \ONE(\Norm(w)\equiv \delta_1\bmod 16) \\
& \cdot \ONE(w\equiv \square\bmod 4\ZZd)\cdot\ONE(\chi_2(\Norm(w)) = s_1)
\end{align*}
and
\begin{align*}
\beta_{f, z} = \ONE(z\text{ is a prime in }&\ZZd) \cdot \ONE(\Norm(z)\equiv \delta_2\bmod 16) \\
& \cdot \ONE(z\equiv \square\bmod 4\ZZd)\cdot\ONE(\chi_2(\Norm(z)) = s_2)
\end{align*}
Since there are exactly two primes in $\ZZd$ lying above each rational prime congruent to $1$ modulo $8$, we have, by \eqref{linkvegamma}, that 
$$
\BB(M, N; f, \Delta) = \frac{1}{2g}\suma_{\substack{w \in\RRR_1(M)}}\suma_{\substack{z \in \RRR_1(N)}}\alpha_{f, w}\beta_{f, z}\left(\frac{\Norm(w)}{\Norm(z)}\right)^{e_1}(\mul(w)\chiw(\oz))^{e_2},
$$
where $g = 4$ if $d_0 = -1$, $g = 2$ if $d_0 = -2$, and $g = 1$ if $d_0 = 2$.

If $e_2 = 0$, then Proposition~\ref{PropB} is a statement about double oscillation of the usual Jacobi symbol $\left(\frac{p}{q}\right)$, and the claim follows from very strong bounds due to Heath-Brown (see \cite{HB}). If $e_2 = 1$, then we apply Proposition~\ref{doubleosc}. Indeed, if $e_1 = 0$ and $e_2 = 1$, we can apply Proposition~\ref{doubleosc} directly to obtain the desired result (absorb $\mul(w)$ into $\alpha_{f, w}$ and note that $\RRR_1(N)$ is invariant under $z\mapsto\oz$). If $e_1 = 1$ and $e_2 = 1$, then by \eqref{linkjacgamma}, we have
$$
\left(\frac{\Norm(w)}{\Norm(z)}\right)\chiw(\oz) = \chiw(z)
$$
whenever $(w)$ and $(z)$ are degree-one primes in $\ZZd$ satisfying $\Norm(w)\equiv\Norm(z)\equiv 1\bmod 8$. Once again, the desired result follows directly from Proposition~\ref{doubleosc} (again absorb $\mul(w)$ into $\alpha_{f, w}$). This finishes the proof of Proposition~\ref{PropB}.

\bibliographystyle{plain}
\bibliography{8rankreferences}

\newpage
\begin{appendix}
\section{Heuristics}\label{Heuristics}
We briefly discuss the conjectural limit of the ratio in Theorem~\ref{mainthm} and the limitations of our methods towards a proof of such a conjecture. Let $G$ be a finite abelian group, and let $\#\Aut(G)$ be the number of automorphims of $G$. Cohen and Lenstra \cite{CohenLenstra} developed a heuristic model for the average structure of class groups of quadratic number fields. Their model is based on the assumption that $G$ occurs as the class group of an imaginary (resp. a real) quadratic field with probability proportional to the inverse of $\#\Aut(G)$ (resp. $\#G\cdot \#\Aut(G)$). Although they stated their model only for the prime-to-$2$ part of the class group, Gerth \cite{Gerth3} extended the model to the $2$-part of the class group by stating that it is $\CL(D)^2$ instead of $\CL(D)$ that behaves like a random group in the sense of~\cite{CohenLenstra}.

Under these assumptions, we can compute a conjectural density for the ratio
$$
\frac{\#\{pq\leq X: rk_4\CL(dpq) = 2, \rk_8\CL(dpq)\geq 1\}}{\#\{pq\leq X: \rk_4\CL(dpq) = 2\}}
$$
from the Main Theorem. Given that $\rk_4\CL(D) = 2$, the $2$-part of $\CL(D)^2$ must be of the form $\ZZ/2^{m}\ZZ\times\ZZ/2^n\ZZ$ for some $n\geq m\geq 1$. In this notation $\rk_8\CL(D) \geq 1$ precisely when $n\geq 2$. An elementary computation yields
$$
\#\Aut(\ZZ/2^{m}\ZZ\times\ZZ/2^n\ZZ) = 
\begin{cases}
3\cdot 2^{4m-3} & \text{if }m = n \\
2^{3m+n-2} & \text{if }m<n.
\end{cases}
$$
Suppose now that $d = -4$, so that we're in the imaginary case. The total weight of all groups of the form $\ZZ/2^{m}\ZZ\times\ZZ/2^n\ZZ$ is
$$
\sum_{m\geq 1}\frac{1}{3\cdot 2^{4m-3}} + \sum_{m\geq 1}\sum_{n \geq m+1}\frac{1}{2^{3m+n-2}} = \frac{4}{9}.
$$
The case when $\rk_8\CL(D) = 0$, i.e. $m = n = 1$, has weight $1/6$. The probability of the complement is thus $(4/9 - 1/6)/(4/9) = 5/8$ and we are led to conjecture
\begin{conjecture}\label{conj1}
Let $d = -4$. Then
$$
\lim_{X\rightarrow\infty}\frac{\#\{pq\leq X: rk_4\CL(dpq) = 2, \rk_8\CL(dpq)\geq 1\}}{\#\{pq\leq X: \rk_4\CL(dpq) = 2\}} = \frac{5}{8}
$$
as $X\rightarrow\infty$.
\end{conjecture}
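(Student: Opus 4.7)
Proving Conjecture \ref{conj1} requires extending the strategy of the Main Theorem to detect the \emph{entire} $2$-part of $\CL(-4pq)$, not merely the presence of a single element of order $8$. The Main Theorem constructs one specific unramified $C_8$-extension of $K = \QQ(\sqrt{-4pq})$ via the symbol $\ve(p,q)$ of \eqref{symbol}, yielding only the lower bound $1/4 = 1/|d|$; the gap with the predicted $5/8$ arises because $\CL$ may admit several distinct unramified $C_8$-extensions and $\ve$ captures only one of them.

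The plan is as follows. For each of the three genus subfields $G_1 = K(\sqrt{-4})$, $G_2 = K(\sqrt{p})$, $G_3 = K(\sqrt{q})$, one would construct Hecke-character-type symbols $\ve_1(p,q), \ve_2(p,q), \ve_3(p,q) \in \{\pm 1\}$, each detecting whether a chosen unramified $C_4$-refinement of $G_i$ extends to an unramified $C_8$-extension of $K$. For $G_1$ this is done in Section \ref{AlgebraicCriteria}; for $G_2$ and $G_3$ the construction would proceed analogously, working with primes above $q$ in $\ZZ[\sqrt{-q}]$ and primes above $p$ in $\ZZ[\sqrt{-p}]$ respectively, so that the splitting conditions at the remaining ramified primes of $K$ translate, via an application of Lemma \ref{lemNor} and Lemma \ref{timestwo}, into quadratic residue symbols in those auxiliary orders.

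Next, using an inclusion--exclusion identity analogous to \eqref{totalsumpq}, one would rewrite the indicator $\ONE(\rk_8 \CL(-4pq) \geq 1)$ (under the conditioning defining the denominator) as a linear combination of products of the $\ve_i(p,q)$'s and Jacobi symbols $\chi_p(q)$. Summing each monomial over $pq \leq X$, the trivial monomial supplies the main term of size $\tfrac{1}{32}\tfrac{X\log\log X}{\log X}$, while each non-trivial monomial --- evaluated via a generalisation of Theorem \ref{Thm2} and of Propositions \ref{PropA} and \ref{PropB} --- contributes in a way determined by the joint distribution of the $\ve_i$'s. Matching these contributions against the Cohen--Lenstra weights of Section \ref{Heuristics} should then yield the density $5/8$.

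The chief obstacle is the analytic estimation of these higher-order joint character sums. Theorem \ref{Thm2}, whose proof relies on Proposition \ref{keycancellation}, handles double oscillation of a single Hecke character on $\ZZI$ against a single Jacobi symbol. Conjecture \ref{conj1}, in contrast, demands joint oscillation of products of up to three Hecke characters --- one living on $\ZZI$, one on the $q$-varying order $\ZZ[\sqrt{-q}]$, and one on the $p$-varying order $\ZZ[\sqrt{-p}]$ --- together with $\chi_p(q)$. Iterating the present double-oscillation framework does not suffice, since there is no common ambient ring in which all three $\ve_i$'s naturally live, and the Heilbronn-style reduction underlying Proposition \ref{keycancellation} becomes unavailable. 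A genuinely new analytic tool would be required, perhaps exploiting a biquadratic or larger number field containing $G_1, G_2, G_3$, or a sieve adapted to joint prime sums with several auxiliary quadratic fields. Moreover, even granting such oscillation estimates, a second, subtler obstacle remains: naïve independence of the three $\ve_i$'s would predict the density $1 - (1/2)^3 = 7/8$ rather than $5/8$, so the character-sum estimates must \emph{preserve}, rather than erase, the precise algebraic correlations among the $\ve_i$'s dictated by the lattice of $C_8$-quotients of $\CL$. It is this delicate interplay between algebra and analysis that appears to place Conjecture \ref{conj1} beyond current techniques.
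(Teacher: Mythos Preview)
The statement in question is a \emph{conjecture}, and the paper does not prove it. The value $5/8$ is obtained in Section~\ref{Heuristics} purely heuristically, first via the Cohen--Lenstra/Gerth weighting of groups $\ZZ/2^m\ZZ\times\ZZ/2^n\ZZ$, and then again via an inclusion--exclusion over the six events $A,A',A'',B,B',B''$ attached to the three genus subfields. So there is nothing to compare your attempt to: you have (correctly) recognised that no proof is available and have instead supplied a discussion of what a proof would require.

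Your discussion is broadly in the spirit of the paper's own, but the paper pinpoints the obstruction more concretely than you do. You locate the difficulty in the lack of a common ambient ring for the three symbols $\ve_i$ and the consequent breakdown of the Heilbronn-type reduction behind Proposition~\ref{keycancellation}. The paper is more specific: for $G_2$ and $G_3$ the relevant auxiliary orders $\ZZ[\sqrt{-p}]$ and $\ZZ[\sqrt{-q}]$ are \emph{not principal ideal domains} (and their class groups may even have nontrivial odd part), so one cannot choose generators $\alpha$ as in \eqref{alpha} of uniformly controlled size as $p,q$ vary. It is this failure of principality, rather than the absence of a common ring, that the paper flags as the decisive obstacle to extending Propositions~\ref{PropA} and~\ref{PropB}.

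One small point: your remark that ``na\"{\i}ve independence of the three $\ve_i$'s would predict $1-(1/2)^3=7/8$'' does not match the paper's heuristic. In the paper each of the three branches carries \emph{two} independent $\pm1$ conditions (splitting at $2$ and at $p$ or $q$), so each branch succeeds with probability $1/4$; moreover the paper observes the algebraic constraint that among $A,A',A''$ (and likewise among $B,B',B''$) either exactly one or all three occur, which forces $\BP(A\cap A'\cap A''\cap B\cap B'\cap B'')=1/16$ and yields $3\cdot\tfrac14-2\cdot\tfrac1{16}=\tfrac58$ directly. Your $7/8$ arises from dropping the splitting-at-$2$ conditions altogether, which is not the relevant comparison.
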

Similarly, in the real case, when $d = 8$, the total weight is
$$
\sum_{m\geq 1}\frac{1}{3\cdot 2^{6m-3}} + \sum_{m\geq 1}\sum_{n \geq m+1}\frac{1}{2^{4m+2n-2}} = \frac{4}{63},
$$
while the weight of the case $m = n = 1$ is $1/24$. The probability of the complement is then $(4/63-1/24)(4/63) = 11/32$, so that we conjecture
\begin{conjecture}\label{conj2}
Let $d = 8$. Then
$$
\lim_{X\rightarrow\infty}\frac{\#\{pq\leq X: rk_4\CL(dpq) = 2, \rk_8\CL(dpq)\geq 1\}}{\#\{pq\leq X: \rk_4\CL(dpq) = 2\}} = \frac{11}{32}
$$
as $X\rightarrow\infty$.
\end{conjecture}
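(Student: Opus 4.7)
The plan is to leverage Gerth's refinement of the Cohen-Lenstra heuristic, according to which $\CL(8pq)^2$, for varying $p,q$, behaves like a random finite abelian $2$-group sampled with weight $1/(\#G\cdot \#\Aut(G))$ in the real case. Conditional on $\rk_4\CL(8pq) = 2$, the $2$-part of $\CL(8pq)^2$ has the form $\ZZ/2^m\ZZ \times \ZZ/2^n\ZZ$ with $n\geq m\geq 1$, and the condition $\rk_8 \CL(8pq) \geq 1$ is equivalent to $n\geq 2$. The conjecture therefore asserts that these two weights, properly interpreted as limiting densities in the family parametrised by $(p,q)$, are attained.

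First I would verify the predicted ratio via the computation sketched in the paper. Combining $\#G = 2^{m+n}$ with
$$
\#\Aut(\ZZ/2^m\ZZ \times \ZZ/2^n\ZZ) = \begin{cases} 3\cdot 2^{4m-3} & \text{if } m = n,\\ 2^{3m+n-2} & \text{if } m<n, \end{cases}
$$
the total Cohen-Lenstra-Gerth mass on $\{(m,n): n\geq m\geq 1\}$ is
$$
\sum_{m\geq 1} \frac{1}{3\cdot 2^{6m-3}} + \sum_{m\geq 1}\sum_{n>m}\frac{1}{2^{4m+2n-2}} = \frac{4}{63},
$$
while the $m=n=1$ stratum contributes $\frac{1}{24}$. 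Hence the conditional probability of $n\geq 2$ equals $(4/63-1/24)/(4/63) = 11/32$, matching the conjecture.

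To upgrade this to an honest proof, the strategy would be to iterate the algebraic construction of Section \ref{AlgebraicCriteria}. The key observation is that the existence of an unramified $C_{2^{k+1}}$-extension of $K$ containing a given unramified $C_{2^k}$-extension $L$ is controlled, via Lemma \ref{timestwo}, by the complete splitting of $\pt$, $\pp$, and $\qq$ in $L$; these splitting conditions can in principle be packaged as products of Hecke character values on $\ZZT$ and its ring class fields. For each fixed $k\geq 1$ one would need (i) an analogue of Proposition \ref{algebra} pinning down $\rk_{2^{k+2}}\CL(8pq)\geq 1$ in character-theoretic terms, and (ii) an oscillation result of the strength of Theorem \ref{Thm2} for the corresponding symbol. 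Summing the asymptotic densities of the events $\rk_8 = 1, \rk_{16} = 1, \ldots$ weighted by the heuristic probabilities above would then recover $11/32$.

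The hard part will be obtaining genuine cancellation beyond the first level. The Hecke characters detecting $\rk_{2^{k+2}}\geq 1$ naturally live on class fields of $\ZZT$ whose conductors and degrees depend on $p$ and $q$, so the double-sum machinery underlying Proposition \ref{PropB} does not transfer directly; even the Siegel-Walfisz-type estimate of Proposition \ref{PropA} becomes delicate because the relevant moduli and character orders grow with $k$. More fundamentally, matching the exact constant $11/32$ — rather than producing a positive lower bound — requires joint equidistribution of the infinite family of higher-rank events, a form of independence that has not been established for any analogous $2$-power-rank problem. It is precisely this uniformity in $k$ that places Conjecture \ref{conj2} genuinely out of reach of the methods developed in this paper, and any serious attack would have to confront it head-on.
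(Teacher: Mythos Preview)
This statement is a \emph{conjecture}, and the paper does not prove it; it only derives the value $11/32$ from the Cohen--Lenstra--Gerth heuristic exactly as you do in your first two paragraphs (same automorphism count, same total mass $4/63$, same $m=n=1$ mass $1/24$). Your reproduction of that heuristic derivation is correct and matches the paper verbatim.

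Your remaining paragraphs are not a proof but a sketch of what a proof might require, and you explicitly say so. That assessment is accurate and in line with the paper's own discussion: the paper's methods give only the lower bound $1/8$ from the Main Theorem, and the paper explains (in the paragraphs following the conjectures) that the obstruction to reaching the full $11/32$ is precisely the inability to handle the other two unramified $C_4$-extensions, those containing $\QQ(\sqrt{8p},\sqrt{q})$ and $\QQ(\sqrt{p},\sqrt{8q})$, because the relevant quadratic rings $\ZZ[\sqrt{8p}]$ and $\ZZ[\sqrt{8q}]$ are no longer principal ideal domains. Your proposed route via higher ring class fields and iterated Hecke characters is a plausible framing, but as you note it faces the same uniformity obstacle; there is nothing to correct here, only the acknowledgment that no proof exists.
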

Both Conjectures \ref{conj1} and \ref{conj2} closely match numerical data generated in Sage.

There is another way to obtain the same conjectures that more closely matches our strategy of proof of Theorem~\ref{mainthm}. For the sake of simplicity, we focus on the case $d = -4$. As we saw in Proposition \ref{algebra}, the existence of an unramified a.f.p.\ $C_8$-extension of $\QQ(\sqrt{-4pq})$ containing $\QQ(\sqrt{-4}, \sqrt{pq})$ is contingent upon two events. The first is
$$
\text{Event }A\text{: condition }\eqref{split22}\text{ holds, the splitting condition at }2,
$$ 
and the second is
$$
\text{Event }B\text{: condition }\eqref{eqp}\text{ holds, the splitting condition at }p.
$$
We already saw in \eqref{samecond} that the splitting condition at $q$ is automatically satisfied if it is satisfied at $p$. Both Events $A$ and $B$ are determined by the values of certain quadratic residue symbols depending on $p$ and $q$. Assuming these symbols take values $+1$ and $-1$ equally often and independently of each other, the probability that both Events $A$ and $B$ occur is $\frac{1}{2}\cdot\frac{1}{2} = \frac{1}{4}$. This is exactly how we prove Theorem~\ref{Thm1}. 

When $\rk_4\CL(-4pq) = 2$, there also exists an unramified a.f.p.\ $C_4$-extension $L'$ (resp. $L''$) of $\QQ(\sqrt{-4pq})$ that contains $\QQ(\sqrt{-4p}, \sqrt{q})$ (resp. $\QQ(\sqrt{p}, \sqrt{-4q})$). For $L'$ (resp. $L''$)to be contained in an unramified a.f.p.\ $C_8$-extension of $\QQ(\sqrt{-4pq})$, there are again two events that must occur. One of them once again concerns the splitting condition at $2$, say Event $A'$ (resp. Event $A''$). The other event, say Event $B'$ (resp. $B''$), concerns the splitting condition at $q$ (resp. $p$). We can once again expect Events $A'$, $A''$, $B'$, and $B''$ to be determined by values of certain quadratic residue symbols, except this time in $\ZZ[\sqrt{-4p}]$ or $\ZZ[\sqrt{-4q}]$. And we can again conjecture that each of there symbols takes the values $+1$ and $-1$ equally often. However, these events are \textit{not} independent. If both $\QQ(\sqrt{-4}, \sqrt{pq})$ and $\QQ(\sqrt{-4p}, \sqrt{q})$ are contained in (distinct) unramified a.f.p.\ $C_8$-extensions of $\QQ(\sqrt{-4pq})$, then so is $\QQ(\sqrt{p}, \sqrt{-4q})$. One can check that out of the events $A$, $A'$, and $A''$, either exactly one or all three events occur, and similarly for $B$, $B'$, and $B''$. Hence, using the principle of inclusion-exclusion, we may conjecture that $\rk_8\CL(D)$ is at least $1$ with probability
\begin{align*}
\BP(A\&B)+ \BP(A'\&B') + \BP(A''\&B'') - &2\BP(A\&A'\&A''\&B\&B'\&B'') \\ 
&=\frac{1}{4}+\frac{1}{4}+\frac{1}{4}-2\cdot \frac{1}{16} = \frac{5}{8}.
\end{align*}     
Thus the discrepancy between our lower bound of $1/4$ from the Main Theorem and the conjectural limit $5/8$ comes from not taking into account unramified a.f.p.\ $C_8$-extensions of $\QQ(\sqrt{-4pq})$ containing $\QQ(\sqrt{-4p}, \sqrt{q})$ or $\QQ(\sqrt{p}, \sqrt{-4q})$. 

The main obstacle in extending the ideas of this paper to handle Events $B'$ and $B''$ is that $\ZZ[\sqrt{-4p}]$ and $\ZZ[\sqrt{-4q}]$ are no longer principal ideal domains, and in fact $\CL(-4p)$ or $\CL(-4q)$ (or both) may have non-trivial odd torsion. Thus it is difficult to control (in a uniform way as $p$ and $q$ vary) the size of the analogues of $\alpha$ from \eqref{alpha}, and a genuinely new idea would be required to apply similar analytic techniques. Theorem \ref{Thm1} already achieves a new lower bound for the $8$-rank, so we leave the task of sharpening this lower bound for a future project. 

\end{appendix}

\end{document}